\newtheorem{theorem}{Theorem}
\newtheorem{lemma}[theorem]{Lemma}
\newtheorem{proposition}[theorem]{Proposition}
\theoremstyle{remark}
\newtheorem*{remark}{Remark}
\numberwithin{equation}{section}
\numberwithin{theorem}{section}
\def\N{{\mathbb N}}
\def\R{{\mathbb R}}
\def\Z{{\mathbb Z}}
\def\B{{\mathcal B}}
\def\C{{\mathcal C}}
\def\G{{\mathcal G}}
\def\I{{\mathcal I}}
\def\V{{\mathcal V}}
\def\1{{\bf 1}}
\def\var{{\rm var}}
\begin{document}

\title{Unique Bernoulli Gibbs states and $\text{g}$-measures}

\author{Paul Hulse}

\thanks{Email: phulse@ed.ac.uk}

\begin{abstract}
A sufficient condition for the Gibbs states
of a shift-invariant specification on a one-dimensional lattice
to be the $g$-chains for some continuous function $g$ is obtained.
This is then used to derive criteria under which there is a unique Gibbs state,
which is also shift-invariant and Bernoulli.
\end{abstract}

\maketitle

\section{Introduction}
We consider Gibbs states on sequence spaces, that is, probability measures
for which there is a specified set of functions that determine
the probabilities of configurations on any finite subset
conditioned on the configuration outside that subset.
Let $X=S^I$ be a sequence space, where
$S$ is a finite set with the discrete topology,
and $I$ is a countable (infinite) set.
The topology on $X$ is the product topology,
with respect to which $X$ is compact,
and $\B$ denotes the corresponding Borel $\sigma$-algebra.
If $x\in S^{\Lambda^\prime}$ $(\Lambda^\prime\subseteq I)$
and $\Lambda\subseteq\Lambda^\prime$,
then $x_\Lambda$ denotes the natural projection of $x$ onto $S^\Lambda$,
and $[x]_\Lambda$ is the cylinder set $\{y\in X:y_\Lambda=x_\Lambda\}$;
similarly, if $s\in S$ and $i\in I$, $[s]_i$ denotes $\{y\in X:y_i=s\}$.
The sub-$\sigma$-algebra of ${\mathcal B}$ generated
by $\{[x]_\Lambda\,:\, x\in X\}$ is denoted by $\B_\Lambda$,
and $\C$ denotes the set of finite, non-empty subsets of $I$.

A {\em specification} on $X$ is a collection
$\V=\{\phi_\Lambda\}_{\Lambda\in\C}$
of non-negative Borel functions on $X$ such that, for all
$\Lambda\in\C$ and $x\in X$,
\begin{equation}
\sum_{y\in[x]_{\Lambda^c}}\phi_\Lambda(y)=1,
\label{1ai}
\end{equation}
and for $K\subseteq\Lambda$,
\begin{equation}
\phi_\Lambda(x)=\phi_K(x)\sum_{y\in[x]_{K^c}}\phi_\Lambda(y).
\label{1aii}
\end{equation}
The specification $\V$ is said to be continuous if 
$\phi_\Lambda$ is continuous for all $\Lambda\in\C$,
and positive if $\phi_\Lambda>0$ for all $\Lambda\in\C$.
If $I=\Z^d$ $(d\geq1)$ then $T=\{T_i\}_{i\in I}$ denotes the
$d$-dimensional shift on $X$, that is,
$(T_ix)_j=x_{i+j}$ $(x\in X,\ i,j\in\Z^d)$,
and $\V$ is said to be $T$-invariant if
$\phi_{i+\Lambda}(x)=\phi_\Lambda(T_ix)$
for all $i\in\Z^d$ and $\Lambda\in\C$.
(As  usual, when $d=1$, we equate $T$ with $T_1$.)

A probability measure $\mu$ on $(X,\B)$ is a {\em Gibbs  state}
for a specification $\V=\{\phi_\Lambda\}_{\Lambda\in\C}$ if
$$
\mu\bigl([x]_\Lambda\big|\,\B_{\Lambda^c}\bigr)(x)
   = \phi_\Lambda(x)\qquad\hbox{a.e.}(\mu)
$$
for all $\Lambda\in\C$.
The set of Gibbs states is denoted by $\G(\V)$.

If $\V$ is continuous, there exists at least one Gibbs state,
and  natural questions concern the structure of $\G(\V)$,
in particular, uniqueness and any ergodic or mixing properties
of Gibbs states.
In general, $\G(\V)$ is a compact, convex subset of $M(X)$
(the set of probability measures on $(X,\B)$), and
its extreme points are those states for which the tail
$\sigma$-field $\bigcap_{\Lambda\in\C}\B_{\Lambda^c}$
is trivial. If in addition $I=\Z^d$ and $\V$ is $T$-invariant,
then at least one of the Gibbs states is $T$-invariant;
the $T$-invariant Gibbs states form a convex subset of $\G(V)$,
the extreme points of which are the ergodic ones.
Thus, a unique $T$-invariant Gibbs state $\mu$ is necessarily ergodic,
but if it is also the only Gibbs state, then it follows from the
trivially of the tail field that $(T,\mu)$ has some strong mixing properties.
However, from the dynamical systems perspective, the system
with the strongest mixing properties is
one which is isomorphic to, and hence indistinguishable from,
a Bernoulli shift (by which we mean the invertible dynamical system
generated by a sequence of independent identically distributed
random variables taking values in a finite set).
While it is known that for those specifications which are
attractive (or ferromagnetic), uniqueness of the Gibbs state
implies the Bernoulli property (see \cite{A,H1,H2}),
in general this is not the case.

Dobrushin \cite{D} showed that if
$\V=\{\phi_\Lambda\}_{\Lambda\in\C}$ is a continuous specification
such that
$$
\sup_{i\in I}\sum_{s\in S}\sum_{j\neq i}
\sup_{\substack{x_{\{j\}^c}=y_{\{j\}^c}\\x_i=y_i=s}}
|\phi_{\{i\}}(x)-\phi_{\{i\}}(y)|<2,
$$
then $\V$ has a unique Gibbs state $\mu$.
This result holds for all countable $I$ and does not depend
on $I$ having a group structure. If $I=\Z^d$ (in fact, if $I$ is any
amenable group) and $\V$ is $T$-invariant,
then $\mu$ is $T$-invariant and the corresponding
dynamical system $(T,\mu)$ is Bernoulli (see \cite{H2}).

Of particular interest in statistical mechanics
are those specifications determined by an interaction potential
$\Phi=\{\Phi_\Lambda\}_{\Lambda\in\C}$, that is, a set
of functions on $X$ such that $\Phi_\Lambda$
is ${\B}_\Lambda$-measurable.
The corresponding specification $\V_\Phi$ is given by
\begin{equation}\label{eq:pot}
\phi_\Lambda(x)=\exp\bigl(-H^\Phi_\Lambda(x)\bigr)\left\{\sum_{y\in[x]_{\Lambda^c}}
                    \exp\bigl(-H^\Phi_\Lambda(y)\bigr)\right\}^{-1},
\end{equation}
where
\[
H^\Phi_\Lambda(x)=\sum_{\{K\in\C:K\cap\Lambda\neq\emptyset\}}\Phi_K(x).
\]
Usually $\Phi$ is required to satisfy a summability condition
that ensures $H^\Phi_\Lambda$ is continuous,
and hence that $\V_\Phi$ is continuous and positive.
When $I=\Z^d$, the interaction potential $\Phi$ is said to be
$T$-invariant if $\Phi_{\Lambda+i}(x)=\Phi_\Lambda(T_ix)$ for all
$\Lambda\in{\mathcal C}$ and $i\in\Z^d$, in which case
${\mathcal V}_\Phi$ is also $T$-invariant.
An example which has been studied extensively is
the well-known Ising model on $\{-1,1\}^{\Z^d}$,
where $\Phi_\Lambda\equiv0$ for $|\Lambda|\neq2$.
In particular, we note that in dimensions $d\geq2$,
the nearest-neighbour model
(where $\Phi_{\{i,j\}}\equiv0$ unless $|i-j|=1$)
gives examples of non-unique Gibbs states.
When $d=1$, the situation is quite different.
Here, the nearest-neighbour model has a unique Gibbs state;
more generally, if $\Phi_{\{i,j\}}(x)=-J(|i-j|)x_ix_j$,
where $\{J(n)\}$ is a non-negative sequence,
there is a unique Gibbs state provided
$\sum_{i=1}^niJ(i)=o(\sqrt{\log n})$ \cite{RT}.
On the other hand, if $J(n)=\beta n^{-\alpha}$, there is a phase transition
(i.e., non-uniqueness) for large enough $\beta$ when $1<\alpha<2$ \cite{FS}.
(A more thorough survey of such results can be found in \cite{HOG}.)

For potentials in general,
Ruelle \cite{R68} showed that when $I=\Z$ and
$\Phi=\{\Phi_\Lambda\}_{\Lambda\in\C}$ is $T$-invariant,
there is a unique $T$-invariant Gibbs state provided
\begin{equation}\label{eq:h2}
\sum_{\Lambda\ni0}\text{diam}(\Lambda)\|\Phi_\Lambda\|<\infty.
\end{equation}
Gallavotti \cite{G} and Ledrappier \cite{L} independently
proved it has the Bernoulli property.
Coelho and Quas \cite{CQ} pointed out that
by combining Ruelle's characterisation \cite{R67} of the $T$-invariant Gibbs states
of $\Phi$ as equilibrium states of the function
$$
A_\Phi(x)=\sum_{\min\Lambda=0}\phi_\Lambda(x)
$$
with the results of Walters \cite{W},
\eqref{eq:h2}
could be replaced by  the weaker hypothesis
\begin{equation}\label{eq:h3}
\sum_{\min\Lambda=0}\text{diam}(\Lambda)\|\Phi_\Lambda\|<\infty.
\end{equation}

Berbee \cite{B} considered continuous
specifications $\{\phi_\Lambda\}_{\Lambda\in\C}$
determined as follows.
There is a sequence $\{\Lambda_n\}_{n=0}^\infty\subset\C$
such that $\Lambda_n\nearrow I$,
and positive $\B_{\Lambda_{n-1}^c}$-measurable functions $f_n\in C(X)$
$(n\geq0)$ (with $\Lambda_{-1}=\emptyset$)
such that
$$
\phi_{\Lambda_n}(x)
=\left(\prod_{j=0}^nf_j(x)\right)
\left(\sum_{y\in[x]_{\Lambda_n^c}}\prod_{j=0}^nf_j(y)\right)^{-1}
\qquad(n\geq 0,\,x\in X).
$$
Any positive continuous specification can be described in this way
(see Section \ref{sec:spec}).
For a positive function $f$ on $X$ and $\Lambda\subseteq I$, let
$$
r_\Lambda(f)=\sup\{f(x)/f(y)\,:\,x_\Lambda=y_\Lambda\},
\qquad\bar r_\Lambda(f)=\inf\{f(x)/f(y)\,:\,x_\Lambda=y_\Lambda\}.
$$
(Note that $\bar r_\Lambda(f)=r_\Lambda(f)^{-1}$.)
Berbee showed that if
\begin{equation}\label{eq:h4}
\sum_{n=0}^\infty\prod_{k=0}^n\inf_{i\geq0}\bar r_{\Lambda_{i+k}}(f_i)
=\infty,
\end{equation}
then there is a unique Gibbs state.
Like Dobrushin's result, this holds for any countable $I$,
and for non-invariant, as well as invariant, specifications.
Taking $I=\Z$ and
\begin{equation}\label{eq:li}
\Lambda_{2k}=[-k,k],\qquad\Lambda_{2k+1}=[-k,k+1]
\qquad(k\geq0)
\end{equation}
(here and later, we use the usual notation for an interval
$\I\subseteq\R$ to denote $\I\cap\Z$),
and comparing hypotheses for the relevant $f_n$ for an
interaction potential \eqref{eq:ipf+}, it can be seen that Berbee's
hypothesis is weaker than Ruelle's \eqref{eq:h2};
however, while in some cases
it is weaker than that of Coelho and Quas \eqref{eq:h3},
in others it is not.

The hypotheses \eqref{eq:h2}--\eqref{eq:h3} can be viewed as
measures of the continuity of $A(\Phi)$ in terms of its variation
on sites outside finite subsets, or equivalently, in terms of the
rate at which $\|\phi_\Lambda\|$ decreases as $|\Lambda|$ increases.
Similarly, \eqref{eq:h4} is a measure of the continuity
of the functions $f_n$, which in the case of an interaction potential
$\Phi$ with the $f_n$ defined by \eqref{eq:ipf+}, is also related
to the continuity of $A_\Phi$.
In this paper, we consider whether this type of continuity hypothesis
can be relaxed.
We restrict our attention to the case $I=\Z$
since \eqref{eq:h2} and \eqref{eq:h3} are only valid in this case, 
and while Berbee's result holds for any $I$, in the case of
$T$-invariant specifications it would seem that \eqref{eq:h4}
rules out any genuinely multi-dimensional examples.
Indeed, as has already been noted, the nearest-neighbour Ising
model provides examples of non-uniqueness in dimensions greater
than one.

As Dobrushin \cite{D} pointed out, any positive specification
$\V=\{\phi_\Lambda\}_{\Lambda\in\C}$
is completely determined by $\{\phi_{\{n\}}\,:\,n\in\Z\}$;
consequently, if $\V$ is also $T$-invariant, it is determined by $\phi_{\{0\}}$ alone.
Thus, it seems natural to seek conditions for uniqueness and Bernoullicity
in terms of $\phi_{\{0\}}$,
or equivalently, the ratios $\{\phi_{\{0\}}(s_0x)/\phi_{\{0\}}(t_0x)\colon s,t\in S,\,x\in X\}$,
where $s_nx\in X$ $(s\in S,\,x\in X,\,n\in\Z)$ is defined by
\begin{equation*}
(s_nx)_i=
\begin{cases}
s,&i=n,\\
x_i,&i\neq n.
\end{cases}
\end{equation*}
We show the following:

\pagebreak

\begin{theorem}\label{thm:bern}
Let $\V=\{\phi_\Lambda\}_{\Lambda\in\C}$
be a positive, continuous, $T$-invariant specification on $S^\Z$,
and let
$$
f(x)=\phi_{\{0\}}(x)/\phi_{\{0\}}(s_0x)\qquad(x\in X)
$$
for some $s\in S$.
If, for some $0<\alpha\leq1$,
\begin{equation*}
\prod_{i=0}^nr_{(-\infty,i]}(f)
=O\bigl(n^{1-\alpha}\bigr),
\end{equation*}
and
\begin{equation*}
\sum_{i=2^n}^{2^{n+1}}
\Bigl(\log r_{[-i,\infty)}(f)\Bigr)^{2\alpha}=o(1),
\end{equation*}
then $\V$ has a unique Gibbs state $\mu$, which is $T$-invariant,
and $(T,\mu)$ is Bernoulli.
\end{theorem}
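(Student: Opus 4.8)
The plan is to prove Theorem~\ref{thm:bern} by reducing the Gibbs states of $\V$ to the $g$-chains of a single continuous function $g$, then transferring the two hypotheses on $f$ into a variation bound on $g$, and finally appealing to the uniqueness-and-Bernoullicity theory for $g$-measures.

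\emph{Step 1: passing from $\V$ to a $g$-function.} By \eqref{1ai} with $\Lambda=\{0\}$ we have $\sum_{t\in S}\phi_{\{0\}}(t_0x)=1$, and since $s_0t_0x=s_0x$ one gets $\phi_{\{0\}}(t_0x)/\phi_{\{0\}}(x)=f(t_0x)/f(x)$ for every $t\in S$; hence
\[
\phi_{\{0\}}(x)=f(x)\Bigl(\sum_{t\in S}f(t_0x)\Bigr)^{-1},
\]
from which $r_{(-\infty,i]}(\phi_{\{0\}})\le r_{(-\infty,i]}(f)^{2}$ and $r_{[-i,\infty)}(\phi_{\{0\}})\le r_{[-i,\infty)}(f)^{2}$. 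Thus the continuity of the single-site kernel in the forward direction is measured by $\{r_{(-\infty,i]}(f)\}$ and in the backward direction by $\{r_{[-i,\infty)}(f)\}$. The first hypothesis $\prod_{i=0}^{n}r_{(-\infty,i]}(f)=O(n^{1-\alpha})$ is (being in particular $o(n)$) of the form under which the paper's other main result applies: the Gibbs states of $\V$ are exactly the $g$-chains of a strictly positive $g\in C(X)$ with $g(x)$ depending only on $x_{(-\infty,0]}$ and $\sum_{s\in S}g(s_0x)=1$; in particular every Gibbs state is $T$-invariant, and $g(x)=\lim_{n}\mu\bigl([x_0]_0\mid\B_{[-n,-1]}\bigr)(x)$ for every $\mu\in\G(\V)$, independently of $\mu$.

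\emph{Step 2: transferring the hypotheses to $g$.} This is the crux. Writing $v_i=\log r_{[-i,\infty)}(g)=\sup\{\log(g(x)/g(y)):x_{[-i,0]}=y_{[-i,0]}\}$ for the backward variation of $g$, the goal is to deduce from the two hypotheses a control of the shape
\[
\sum_{i=2^{n}}^{2^{n+1}}v_i^{\,2}=o(1)\qquad(n\to\infty),
\]
i.e.\ square-summability of the variations of $\log g$ along dyadic blocks, possibly carrying a polynomial weight when $\alpha<1$. The mechanism is that a perturbation of the coordinates to the left of $-i$ affects $g(x)=\lim_n\mu([x_0]_0\mid\B_{[-n,-1]})(x)$ in two ways: directly, through the dependence of $\phi_{\{0\}}$ on those coordinates, which contributes at most $2\log r_{[-i,\infty)}(f)$; and indirectly, through the dependence on them of the conditional law of the intervening coordinates $x_{[-i,-1]}$, a contribution propagated across $i$ sites which, by the consistency relation \eqref{1aii} and the identity above, is controlled by the accumulated forward distortion $\prod_{j}r_{(-\infty,j]}(f)=O(i^{1-\alpha})$. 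Carrying this out on the finite-volume kernels $\phi_{[-N,0]}$ and letting $N\to\infty$ yields a bound for $v_i$ in terms of the backward variations $\log r_{[-j,\infty)}(f)$ ($j\ge i$) and the forward-distortion product; the exponents $1-\alpha$ and $2\alpha$ in the two hypotheses are precisely calibrated so that, after squaring, summing over $[2^n,2^{n+1}]$, and applying H\"older's inequality (with exponents determined by $\alpha$), the second hypothesis delivers the displayed $o(1)$. Obtaining the correct form of this intermediate inequality and pushing it through the finite-volume approximation and the conditioning is the main obstacle.

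\emph{Step 3: uniqueness and Bernoullicity.} With the bound of Step 2 in hand, $g$ satisfies a square-summable-variation condition of the type known to force uniqueness and the Bernoulli property for $g$-measures (in the spirit of the work of Johansson, \"{O}berg and Pollicott). One then concludes --- either citing that theory in the form required, or reproving it by the standard coupling-from-the-past argument, in which two copies of the chain run from arbitrary pasts but sharing a long common recent past couple with probability tending to $1$, giving simultaneously uniqueness of the $g$-chain, triviality of its tail $\sigma$-field, and a $\bar d$-approximation of $\mu$ by $k$-step Markov measures, whence the Bernoulli property via Ornstein's theory --- that there is a unique $g$-chain $\mu$, which is $T$-invariant, and that $(T,\mu)$ is Bernoulli. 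By Step 1, $\mu$ is the unique Gibbs state of $\V$, which completes the proof.
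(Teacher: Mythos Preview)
Your three-step architecture --- reduce $\G(\V)$ to the $g$-chains of a single continuous $g$, transfer the hypotheses on $f$ into a square-summable variation bound on $g$, then invoke the Johansson--\"Oberg--Pollicott theorem --- is exactly the paper's strategy, and Steps~1 and~3 match well. (One slip in Step~1: the claim ``in particular every Gibbs state is $T$-invariant'' is premature; that only follows \emph{after} uniqueness of the $g$-chain has been established in Step~3.)

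The genuine gap is Step~2, which you yourself flag as ``the main obstacle'' but do not actually carry out. Your heuristic --- a ``direct'' contribution of size $2\log r_{[-i,\infty)}(f)$ plus an ``indirect'' one controlled by the forward-distortion product $\prod_{j}r_{(-\infty,j]}(f)=O(i^{1-\alpha})$, to be recombined via H\"older --- is not how the estimate goes, and as stated it does not make sense: for $\alpha<1$ the product $i^{1-\alpha}$ grows, so it cannot bound a variation that must tend to $0$. The paper's mechanism is quite different and does not involve H\"older at all. One first fixes a specification sequence $\{[0,n],f_n\}$ with $f_0=f$ and $r_{[-n,i+j]}(f_j)\le r_{[-n,i]}(f)$. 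A Berbee-type recursive ratio bound (Proposition~\ref{prop:rb}, proved through Lemmas~\ref{lem:rb1}--\ref{lem:rb2}) then yields $\log r_{[-n,0]}(g_\V)\le 2R_n^{-1}$ with
\[
R_n=\sum_{k\ge 1}\prod_{j=0}^{k-1}\inf_{i\ge0}\bar r_{[-n,i+j]}(f_i).
\]
The key observation is that $R_n$ dominates the power series $\sum_{k}a_kt_n^{\,k}$, where $a_k=\prod_{j<k}\inf_i\bar r_{(-\infty,i+j]}(f_i)$ encodes the forward hypothesis and $t_n=\inf_i\bar r_{[-n,\infty)}(f_i)$ the backward one. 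A Tauberian theorem (Lemma~\ref{lem:g}) converts $a_k^{-1}=O(k^{1-\alpha})$ into $\sum_ka_kt^k\gtrsim(1-t)^{-\alpha}$ as $t\nearrow1$, whence
\[
\log r_{[-n,0]}(g_\V)=O\bigl((\log r_{[-n,\infty)}(f))^{\alpha}\bigr);
\]
substituting this into the second hypothesis gives precisely the dyadic-block condition of Theorem~\ref{thm:c}. None of the three essential ingredients --- the Berbee ratio recursion, the factorisation of $R_n$ as a power series in $t_n$, and the Tauberian step --- appears in your sketch, and without them Step~2 remains a programme rather than a proof.
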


In particular, there is a unique, Bernoulli Gibbs state if
$$
\max\left\{\log r_{(-\infty,n]}(f),\,\log r_{[-n,\infty)}(f)\right\}
\leq\beta n^{-1}+a_n
\qquad(n\geq1)
$$
for some $\beta<1/2$ and non-negative series $\sum a_n<\infty$.
In fact, we prove Theorem \ref{thm:bern} under slightly weaker hypotheses
(Theorem \ref{thm:bern1}),
which allows the following version to be deduced when the specification is given
in terms of an interaction potential.

\begin{theorem}\label{thm:bern2}
Let $\Phi=\{\Phi_\Lambda\}_{\Lambda\in\C}$
be a $T$-invariant interaction potential on $S^\Z$. If
\begin{equation}\label{hyp3}
\sum_{\substack{\Lambda\ni0\\\Lambda\cap[n,\infty)\neq\emptyset}}
\var(\Phi_\Lambda)
\leq\beta n^{-1}+a_n
\qquad(n\geq1)
\end{equation}
for some $\beta<1/2$ and $a_n\geq0$ such that $\sum a_n<\infty$,
then $\Phi$ has a unique Gibbs state $\mu$, which is $T$-invariant,
and $(T,\mu)$ is Bernoulli.
\end{theorem}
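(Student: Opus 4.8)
The plan is to deduce Theorem~\ref{thm:bern2} from the (slightly more general) Theorem~\ref{thm:bern1} by writing the single-site specification of $\Phi$ explicitly and bounding the continuity quantities occurring in the hypothesis of Theorem~\ref{thm:bern1} by tail sums of $\var(\Phi_\Lambda)$. First note that \eqref{hyp3} with $n=1$ gives $\sum_{\Lambda\ni0}\var(\Phi_\Lambda)\le\beta+a_1<\infty$, so $H^\Phi_{\{0\}}$, and hence every $H^\Phi_\Lambda$, converges uniformly; thus $\V_\Phi$ is a positive, continuous, $T$-invariant specification and Theorem~\ref{thm:bern1} applies once its hypotheses are verified. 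Since the normalising sum in \eqref{eq:pot} for $\Lambda=\{0\}$ depends only on $x_{\{0\}^c}$, it is unchanged by $x\mapsto s_0x$, so
\[
f(x)=\frac{\phi_{\{0\}}(x)}{\phi_{\{0\}}(s_0x)}
   =\exp\Bigl(H^\Phi_{\{0\}}(s_0x)-H^\Phi_{\{0\}}(x)\Bigr)
   =\exp\Bigl(\sum_{\Lambda\ni0}\bigl(\Phi_\Lambda(s_0x)-\Phi_\Lambda(x)\bigr)\Bigr),
\]
a positive continuous function whose logarithm involves only the $\Lambda$ containing $0$.

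The key estimate is on the oscillation of $\log f$ over the sites of a half-line. If $x_J=y_J$, where $J^c$ is a half-line of the form $(-\infty,n]$ or $[-n,\infty)$, then in $\log f(x)-\log f(y)$ every term with $\Lambda\subseteq J^c$ cancels (both $\Phi_\Lambda$ and its $s_0$-translate are then determined by the common coordinates), so only the $\Lambda\ni0$ meeting $J$ survive, and for each of these the summand, being a difference of two differences of $\Phi_\Lambda$, has modulus at most $2\var(\Phi_\Lambda)$. Hence
\[
\log r_{(-\infty,n]}(f)\le 2\!\!\sum_{\substack{\Lambda\ni0\\\Lambda\cap[n+1,\infty)\ne\emptyset}}\!\!\var(\Phi_\Lambda)\le\frac{2\beta}{n+1}+2a_{n+1}
\]
by \eqref{hyp3}, and, by a similar argument using the $T$-invariance of $\Phi$, $\log r_{[-n,\infty)}(f)$ is controlled by a comparable tail sum. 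Feeding the first bound into the product hypothesis of Theorem~\ref{thm:bern1} gives
\[
\log\prod_{i=0}^{n}r_{(-\infty,i]}(f)\le 2\beta\sum_{i=1}^{n+1}\frac1i+2\sum_{i\ge1}a_i=2\beta\log n+O(1),
\]
so $\prod_{i=0}^{n}r_{(-\infty,i]}(f)=O(n^{2\beta})$; as $\beta<\tfrac12$ we have $2\beta<1$ and may take $\alpha=1-2\beta$ (or any smaller positive number), giving $O(n^{1-\alpha})$ as required. The remaining, Bernoulli-type hypothesis of Theorem~\ref{thm:bern1} is verified from the same tail bounds on $\log r_{[-i,\infty)}(f)$; it is precisely here that the hypothesis of Theorem~\ref{thm:bern1} is weaker than that of Theorem~\ref{thm:bern}, so that the constant $\beta<\tfrac12$ (rather than a smaller one) suffices, the summable part $\sum a_i$ being absorbed by routine convexity estimates. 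Theorem~\ref{thm:bern1} then provides a unique Gibbs state $\mu$ for $\V_\Phi$, which is $T$-invariant, with $(T,\mu)$ Bernoulli.

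The main obstacle is not the calculation but the matching of hypotheses. The bound $\log r_{(-\infty,n]}(f)\le 2\sum\var(\Phi_\Lambda)$ genuinely carries the factor $2$ — one can arrange both constituent differences of $\Phi_\Lambda$ to be extremal at once — so the naive route, via the ``in particular'' statement following Theorem~\ref{thm:bern}, would only cover $\beta<\tfrac14$; reaching all $\beta<\tfrac12$ is exactly what forces the argument through the sharper Theorem~\ref{thm:bern1}. The second delicate point is that \eqref{hyp3} measures only the rightward reach of the interactions, whereas $f$ depends on coordinates on both sides of $0$; reconciling the two uses the $T$-invariance of $\Phi$ together with the one-sided-friendly form of the hypotheses of Theorem~\ref{thm:bern1}.
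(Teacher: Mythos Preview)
Your overall strategy---reduce to Theorem~\ref{thm:bern1} via explicit bounds coming from the potential---is the same as the paper's, but the specific route through the single function $f(x)=\phi_{\{0\}}(x)/\phi_{\{0\}}(s_0x)$ does not reach $\beta<\tfrac12$. As you correctly note, $\log f$ is a \emph{difference} $H^\Phi_{\{0\}}(s_0x)-H^\Phi_{\{0\}}(x)$, so its oscillation over a half-line picks up a genuine factor $2$, giving $\prod_{i=0}^n r_{(-\infty,i]}(f)=O(n^{2\beta})$ and hence $\alpha=1-2\beta$. The second hypothesis of Theorem~\ref{thm:bern1} (or of Theorem~\ref{thm:bern}; the two have identical functional form, differing only in which quantity sits inside) then requires $\sum_{i=2^n}^{2^{n+1}}(2\beta/i)^{2\alpha}\to0$, i.e.\ $2\alpha>1$, i.e.\ $\beta<\tfrac14$. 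Your appeal to Theorem~\ref{thm:bern1} being ``weaker'' here is unjustified: with the specification sequence built from $f$ via \eqref{eq:fbar}--\eqref{eq:f+var}, one has $\sup_{j}r_{(-\infty,i+j]}(f_j)\le r_{(-\infty,i]}(f)$ and $\sup_j r_{[-i,\infty)}(f_j)\le r_{[-i,\infty)}(f)$, so Theorem~\ref{thm:bern1} yields exactly what Theorem~\ref{thm:bern} yields for this choice.

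The paper's proof avoids the factor $2$ by working not with $f$ but with the specification sequence $f_n$ of \eqref{eq:ipf+}, whose logarithm is a \emph{single} sum $-\sum_{\Lambda\ni n,\,\Lambda\cap[0,n-1]=\emptyset}\Phi_\Lambda$ rather than a difference. Then \eqref{eq:ipr} gives
\[
\max\Bigl\{\log r_{(-\infty,i+n]}(f_i),\ \log r_{[-n,\infty)}(f_i)\Bigr\}
\le\sum_{\substack{\Lambda\ni0\\\Lambda\cap[n+1,\infty)\ne\emptyset}}\var(\Phi_\Lambda)
\]
with no factor $2$, so one may take $\alpha=1-\beta$, and $\beta<\tfrac12$ is exactly the condition $2\alpha>1$ needed for the dyadic sum to vanish. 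The gain of Theorem~\ref{thm:bern1} over Theorem~\ref{thm:bern} is precisely this freedom to choose a better specification sequence; you must actually exercise it, using \eqref{eq:ipf+}, to obtain the full range $\beta<\tfrac12$.
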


This includes all interaction potentials satisfying \eqref{eq:h2}.
In some cases,
such as when $\sup\{|\Lambda| \,:\,\Phi_\Lambda\not\equiv0\}<\infty$,
\eqref{hyp3} is also weaker than \eqref{eq:h3},
but in others it is not.
Comparison with \eqref{eq:h4} is less straightforward,
since \eqref{eq:h4} applies to non-invariant specifications
and arbitrary increasing sequences $\{\Lambda_n\}_{n=0}^\infty\subset\C$.
Consideration of the Ising model suggests Theorems \ref{thm:bern} and \ref{thm:bern2}
can give better results for uniqueness in some cases,
but the main difference is that the Bernoulli property is obtained.

The idea of the proof of Theorem \ref{thm:bern1} is to characterise a Gibbs state $\mu$
in terms of its conditional probabilities
$$
g(x)=\mu\bigl([x]_{\{0\}}\big|\,\B_{(-\infty,-1]}\bigr)(x)\qquad(x\in X),
$$
in other words, as a $g$-chain; uniqueness, $T$-invariance and the Bernoulli property
can then be determined from similar results for $g$-chains.
Here we use a result (Theorem \ref{thm:c}) which is a formulation
based on the techniques of Johansson, \"Oberg and Pollicott \cite{JOP}.
The estimate of the variation of $\log g$ needed for Theorem \ref{thm:c} is 
provided by Theorem \ref{thm:g}.
This result may be of some interest in its own right
as it gives a sufficient condition for the set of Gibbs states of a $T$-invariant specification to be
the set of $g$-chains for some continuous function $g$.
(A characterisation of $g$-chains which are also Gibbs states,
as well as a discussion of the problem of determining whether a Gibbs state is a $g$-chain,
can be found in \cite{BFV}.)
Theorem \ref{thm:g} follows from Proposition \ref{prop:g},
which gives a general criterion for
the set of Gibbs states to be the same as
the set of $g$-chains for some continuous function $g$,
and Proposition \ref{prop:rb},
which is an adaptation of Berbee's method of ratio bounds in \cite{B}.

Lastly, we consider some alternative hypotheses which guarantee a unique $T$-invariant
Gibbs state but not the Bernoulli property.

\section{Specifications and Gibbs states}\label{sec:spec}

We start with some preliminary remarks about specifications and
Gibbs states; a more detailed account can be found in \cite{HOG}, for example.
The following notation will be used throughout:
if $x\in S^{\Lambda_1}$, $y\in S^{\Lambda_2}$ are sequences
and $\Lambda_1\cap\Lambda_2=\emptyset$,
then $xy\in S^{\Lambda_1\cup\Lambda_2}$ denotes the sequence such that
\begin{equation*}
(xy)_i=
\begin{cases}
x_i,&i\in\Lambda_1,\\
y_i,&i\in\Lambda_2.
\end{cases}
\end{equation*}
The set of bounded Borel functions on $X$ is denoted by $B(X)$.

Let $X=S^\Z$ be a sequence space as described in the introduction,
and let $\V=\{\phi_\Lambda\}_{\Lambda\in\C}$ be a specification on $X$.
Then $\V$ determines a set of probability measures
$\pi^\V_\Lambda(\,\cdot\,|\,x)$ $(\Lambda\in\C,\ x\in X)$
as follows:
$$
\pi^\V_\Lambda(f\,|\,x)
=\sum_{y\in[x]_\Lambda}\phi_\Lambda(y)f(y)
\qquad(f\in C(X)),
$$
and $\mu\in\G(\V)$ if and only if
$$
\mu\bigl([x]_\Lambda\big|\,\B_{\Lambda^c}\bigr)(x)
=\pi^\V_\Lambda\bigl([x]_\Lambda\big|\,x\bigr)\quad{\rm a.e.}\,(\mu)
\qquad (\Lambda\in\C).
$$Thus, if $\V$ is continuous,
$\G(\V)$ is closed (in the weak-* topology),
and moreover,
any weak-* limit of measures of the form
$$
\pi^\V_{\Lambda_n}(\,\cdot\,|\,x^{(n)})
\qquad(x^{(n)}\in X,\ \Lambda_n\nearrow \Z)
$$
is a Gibbs state for $\V$.
Since $M(X)$ is compact, it follows that
there is at least one Gibbs state $\mu$,
and the uniqueness of $\mu$ is equivalent to
the uniform convergence (for each $f\in C(X)$)
of $\pi^\V_\Lambda(f\,|\,x)$ to $\mu(f)$
as $\Lambda\nearrow\Z$.

Similarly, if $\V$ is $T$-invariant,
any weak-* limit of measures of the form
$$
\mu_n(f)=n^{-1}\sum_{i=0}^{n-1}\pi^\V_{[0,n-1]}(fT^i\,|\,x^{(n)})
\qquad(f\in C(X),\,x^{(n)}\in X)
$$
is a $T$-invariant Gibbs state for $\V$.
By compactness, there is always one such measure $\mu$,
and its uniqueness is equivalent to
$$
\lim_{n\to\infty}n^{-1}\sum_{i=0}^{n-1}\pi^\V_{[0,n-1]}(fT^i\,|\,x)=\mu(f)
$$
uniformly in $x$ for all $f\in C(X)$.

Note that if $\phi_\Lambda$ is given for some $\Lambda\in\C$,
then (\ref{1aii}) determines $\phi_K$ for all $K\subseteq\Lambda$.
Moreover, if (\ref{1aii}) holds for the pairs $K,\Lambda$
and $K',K$, where $K'\subseteq K\subseteq\Lambda$,
then it holds for $K',\Lambda$.
Thus, to define a specification,
it is enough to consider a sequence
$\{\Lambda_n\}_{n=0}^\infty\subset\C$
such that $\Lambda_n\nearrow \Z$ and specify the $\phi_{\Lambda_n}$
so that (\ref{1ai}) and (\ref{1aii}) hold for
$\Lambda_{n-1},\Lambda_n$ $(n\geq 1)$.

Let $\V$ be positive;
note that for positive specifications, \eqref{1aii} is equivalent to
\begin{equation}\label{1aiii}
\frac{\phi_K(x)}
{\phi_K(x^*_Kx^{\phantom{*}}_{K^c})}
=
\frac{\phi_\Lambda(x)}
{\phi_\Lambda(x^*_Kx^{\phantom{*}}_{K^c})}
\qquad(K\subseteq\Lambda,\ x,x^*\in X).
\end{equation}
If $\Lambda\in\C$
and $\{i_0,\ldots,i_n\}$ is an enumeration of $\Lambda$,
with $\Lambda_j=\{i_0,\ldots,i_j\}$ ($0\leq j\leq n$),
then for any $x,x^*\in X$, we have
\begin{align*}
\frac{\phi_{\Lambda}(x)}
{\phi_\Lambda(x_\Lambda^*x_{\Lambda^c}^{\phantom{*}})}
&=
\frac{\phi_{\Lambda}(x)}
{\phi_{\Lambda}(x_{\Lambda_{n-1}}^*x_{\Lambda_{n-1}^c}^{\phantom{*}})}
\,\cdot\,\frac
{\phi_{\Lambda}(x_{\Lambda_{n-1}}^*x_{\Lambda_{n-1}^c}^{\phantom{*}})}
{\phi_{\Lambda}(x_{\Lambda}^*x_{\Lambda^c}^{\phantom{*}})}\\
&=\frac{\phi_{\Lambda_{n-1}}(x)}
{\phi_{\Lambda_{n-1}}(x_{\Lambda_{n-1}}^*
x_{\Lambda_{n-1}^c}^{\phantom{*}})}
\,\cdot\,\frac
{\phi_{\{i_n\}}(x_{\Lambda_{n-1}}^*x_{\Lambda_{n-1}^c}^{\phantom{*}})}
  {\phi_{\{i_n\}}(x_{\Lambda}^*x_{\Lambda^c}^{\phantom{*}})}
&\qquad(\text{by \eqref{1aiii}}),
\end{align*}
and so it follows that
\begin{equation}\label{eq:spec}
\frac{\phi_{\Lambda}(x)}
{\phi_{\Lambda}(x_{\Lambda}^*x_{\Lambda^c}^{\phantom{*}})}
=\prod_{j=0}^n\frac
{\phi_{\{i_j\}}(x_{\Lambda_{j-1}}^*x_{\Lambda_{j-1}^c}^{\phantom{*}})}
{\phi_{\{i_j\}}(x_{\Lambda_j}^*x_{\Lambda_j^c}^{\phantom{*}})}
\qquad(x,x^*\in X,\ n\geq 0).
\end{equation}
(Here and later, $\Lambda_{-1}$ is taken to be $\emptyset$.)
This, together with \eqref{1ai}, determines $\phi_\Lambda$.
In particular, any positive specification
$\V=\{\phi_\Lambda\}_{\Lambda\in\C}$
is completely determined by the $\phi_{\{n\}}$ $(n\in\Z)$.
Moreover, $\V$ is $T$-invariant provided
$\phi_{\{n\}}=\phi_{\{0\}}\circ T^n$ for all $n\in\Z$,
in which case it is determined solely by $\phi_{\{0\}}$.

Conversely, given a sequence of positive functions $\{\phi_{\{n\}}\}_{n\in\Z}\subset B(X)$
such that $\sum_{y\in[x]_{\{n\}}}\phi_{\{n\}}(y)=1$ for all $x\in X$,
and $\{\Lambda_n\}_{n=0}^\infty\subset\C$ such that $\Lambda_n\nearrow\Z$
and $|\Lambda_n\setminus\Lambda_{n-1}|=1$ $(n\geq0)$,
define $\phi_{\Lambda_n}$ by \eqref{1ai} and
\begin{equation*}
\frac{\phi_{\Lambda_n}(x)}
{\phi_{\Lambda_n}(x_{\Lambda_n}^*x_{\Lambda_n^c}^{\phantom{*}})}
=\prod_{j=0}^n\frac
{\phi_{\{i_j\}}(x_{\Lambda_{j-1}}^*x_{\Lambda_{j-1}^c}^{\phantom{*}})}
{\phi_{\{i_j\}}(x_{\Lambda_j}^*x_{\Lambda_j^c}^{\phantom{*}})}
\qquad(x,x^*\in X,\ n\geq1),
\end{equation*}
where $\{i_n\}=\Lambda_n\setminus\Lambda_{n-1}$.
Clearly, the $\phi_{\Lambda_n}$ satisfy \eqref{1aiii},
and so determine a specification, which is independent of the choice of $\{\Lambda_n\}$
since a specification is determined solely by the $\phi_{\{n\}}$.
Thus, any sequence of positive functions $\{\phi_{\{n\}}\}_{n\in\Z}\subset B(X)$
such that $\sum_{y\in[x]_{\{n\}}}\phi_{\{n\}}(y)=1$ for all $x\in X$
determines a unique positive specification.
In particular, any positive $f\in B(X)$ determines a (unique) $T$-invariant specification
$\V=\{\phi_\Lambda\}_{\Lambda\in\C}$ such that
\begin{equation*}
\phi_{\{n\}}(x)=\frac{f(T^nx)}{\sum_{y\in[T^nx]_{\{0\}}}f(y)}
\qquad(x\in X,\,n\in\Z).
\end{equation*}

By a {\it specification sequence\/},
we mean a sequence $\{\Lambda_n,f_n\}_{n=0}^\infty\subset\C\times B(X)$
such that $\Lambda_{n-1}\subset\Lambda_n$,
$|\Lambda_n\setminus\Lambda_{n-1}|=1$,
and $f_n$ is positive and $\B_{\Lambda_{n-1}^c}$-measurable;
it is a specification sequence for the specification $\V=\{\phi_\Lambda\}_{\Lambda\in\C}$ if
\begin{equation}\label{eq:fdef}
\phi_{\Lambda_n}(x)
=\left(\prod_{j=0}^nf_j(x)\right)
\left(\sum_{y\in[x]_{\Lambda_n^c}}\prod_{j=0}^nf_j(y)\right)^{-1}
\qquad(n\geq 0,\,x\in X).
\end{equation}
If $\V$ is positive, such a sequence  exists for any
choice of $\{\Lambda_n\}$.
Indeed, let $x^*\in X$ and define
\begin{equation}\label{eq:fbar}
f_n(x)=\frac{\phi_{\{i_n\}}
(x_{\Lambda_{n-1}}^*x_{\Lambda_{n-1}^c}^{\phantom{*}})}
{\phi_{\{i_n\}}(x_{\Lambda_n}^*x_{\Lambda_n^c}^{\phantom{*}})}
\qquad(x\in X,\ n\geq 0),
\end{equation}
where $\Lambda_n\setminus\Lambda_{n-1}=\{i_n\}$.
Then $f_n$ is suitably measurable
and it follows from \eqref{eq:spec} that \eqref{eq:fdef} holds.
If $\V$ is continuous, so are the $f_n$,
and if $\V$ is also $T$-invariant, then a consequence of
\eqref{eq:fbar} is that if
\begin{equation}\label{eq:fs}
f(x)=\phi_{\{0\}}(x)/\phi_{\{0\}}(s_0x)\qquad(x\in X)
\end{equation}
for some $s\in S$, then a specification sequence
$\{[0,n],f_n\}_{n=0}^\infty$
can be chosen so that $f_0=f$ and 
\begin{equation}\label{eq:f+var}
r_{[-n,i+j]}(f_j)=r_{[-n-j,i]}(f_0)\leq r_{[-n,i]}(f)
\qquad(i,j,n\geq0).
\end{equation}
Conversely, any specification sequence $\{\Lambda_n,f_n\}_{n=0}^\infty$
such that $\Lambda_n\nearrow \Z$ determines a (unique) specification
$\V=\{\phi_\Lambda\}_{\Lambda\in\C}$ via \eqref{eq:fdef},
since \eqref{1aiii} holds for the pairs $\Lambda_{n-1}$, $\Lambda_n$.

Any positive, continuous, $T$-invariant specification $\V$ can be
determined by a $T$-invariant interaction potential $\Phi=\{\Phi_\Lambda\}_{\Lambda\in\C}$
such that for each $\Lambda\in\C$,
$$
\sum_{\Lambda^\prime\cap\Lambda\neq\emptyset}
\bigl|\Phi_{\Lambda^\prime}(x)-\Phi_\Lambda(x)\bigr|
$$
converges uniformly in $x$ (see \cite{S} for details).
If $\sum_{\Lambda\ni0}\|\Phi_\Lambda\|<\infty$,
the functions
\begin{equation}\label{eq:ipf+}
f_n(x)=\exp\left(
-\sum_{\substack{\Lambda\ni n\\\Lambda\cap[0,n-1]=\emptyset}}
\Phi_\Lambda(x)\right)\qquad(n\geq 0,\ x\in X)
\end{equation}
determine a specification sequence $\{[0,n],f_n\}_{n=0}^\infty$
for $\V$,
and since $\Phi$ is $T$-invariant,
\begin{equation}\label{eq:ipr}
\max\bigl\{\log r_{(-\infty,i+n]}(f_i),\,\log r_{[-n,\infty)}(f_i)\bigr\}
\leq \sum_{\substack{\Lambda\cap[n+1,\infty)\neq\emptyset\\0\in\Lambda}}
\var(\Phi_\Lambda)
\qquad(i,n\geq0).
\end{equation}

\section{A ratio bound}\label{sec:rb}

Given a specification sequence $\bigl\{[0,n],f_n\bigr\}_{n=0}^\infty$,
define $L_{m,n}\colon B(X)\rightarrow B(X)$ $(0\leq m\leq n)$ by
\begin{equation*}
L_{m,n}f(x)=
\sum_{y\in[x]_{[m,n]^c}}\prod_{i=m}^{n}f_i(y)f(y)
\qquad(f\in B(X)),
\end{equation*}
and let
\begin{gather*}
\rho_k^{(n)}(x,y)=\inf\left\{
\frac{L_{m,m+n}\1[\zeta](x)}{L_{m,m+n}\1[\zeta](y)}
\,:\,\zeta\in S^{[m,m+k]},\ m\geq 0\right\},\\
\noalign{\vskip 10pt}
\rho_{-1}^{(n)}(x,y)=\inf\left\{
\frac{L_{m,m+n}1(x)}{L_{m,m+n}1(y)}
\,:\,m\geq 0\right\}
\qquad(k,n\geq 0,\,x,y\in X).
\end{gather*}
In addition to the usual notation for sets of integers,
we denote the sets of negative and non-positive integers
by $\N^-$ and $\Z^-$ respectively.

\begin{proposition}\label{prop:rb}
Let $\bigl\{[0,n],f_n\bigr\}_{n=0}^\infty$ be a specification sequence,
 $A\subseteq\N^-$, and
\begin{equation*}
v_k=\inf_{i\geq0}\bar r_{A\cup[0,i+k]}(f_i)
\qquad(k\geq 0).
\end{equation*}
Then
$$
\liminf_{n\to\infty}\ 
\inf_{x_A=y_A}\rho_0^{(n)}(x,y)\rho_0^{(n)}(y,x)
\geq\lim_{n\to\infty}\Biggl(\sum_{k=0}^n\prod_{j=0}^kv_j\Biggr)^2
\Biggl(1+\sum_{k=0}^n\prod_{j=0}^kv_j\Biggr)^{-2}.
$$
\end{proposition}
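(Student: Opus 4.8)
The plan is to adapt Berbee's ratio-bound argument (\cite{B}) to the operators $L_{m,n}$, tracking how the ratio $L_{m,m+n}\1[\zeta](x)/L_{m,m+n}\1[\zeta](y)$ improves as the sum $[m,m+n]$ grows, under the hypothesis $x_A=y_A$. The key observation is the recursive structure: since $f_{i}$ is $\B_{[0,i-1]^c}$-measurable, one has, for any $m\geq 0$ and any cylinder $\zeta$ on $[m,m+k]$,
$$
L_{m,m+n}\1[\zeta](x)
=\sum_{s\in S}L_{m,m+n-1}\bigl(f_{m+n}\,\1[\zeta]\cdot\1[s]_{m+n}\bigr)(x),
$$
which lets one peel off one coordinate at a time. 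First I would set up the elementary inequality that, if $g,h$ are positive functions and $a=\inf g/h$, $b=\sup g/h$, then for any positive $c$ we have $(c+g)/(c+h)\geq\min\{1,a\}$, and more precisely a convexity bound giving a contraction factor depending on the ratio $b/a$; this is the core of Berbee's estimate. Applying this with $g,h$ arising from the two coordinate values $x_i,y_i$ that may differ, and using that the relevant variation of $f_i$ on the coordinate set $A\cup[0,i+k]$ is controlled by $\bar r_{A\cup[0,i+k]}(f_i)\geq v_k$, I would obtain a one-step recursion for $q_n := \inf_{x_A=y_A}\rho_0^{(n)}(x,y)$ of Berbee-type form, something like $q_{n}\geq$ (expression in $q_{n-1}$ and $v_\bullet$).

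The heart of the matter is to show that iterating this recursion produces the product/partial-sum quantity $S_n := \sum_{k=0}^n\prod_{j=0}^k v_j$ in the stated shape $S_n^2(1+S_n)^{-2}$ after symmetrising (multiplying the $x\leftrightarrow y$ bounds together). I expect the recursion to have the form where, writing $p_n = \rho_0^{(n)}(x,y)\rho_0^{(n)}(y,x)$, one gets $p_n \geq \bigl(v_0 + v_0 v_1 p_{n-1}'\bigr)/(1 + \cdots)$ with the $v_j$ indices shifting because enlarging $n$ by one corresponds to advancing the "depth" at which the coordinates of $A$ are seen; the shift is exactly why $v_k$ appears with the running product $\prod_{j=0}^k v_j$ rather than a single $v$. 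Carefully matching indices so that the telescoping yields $S_n$ — and checking that the limit on the right-hand side is legitimate (the sequence $S_n$ is nondecreasing, possibly diverging to $\infty$, in which case the bound reads $\geq 1$, consistent with $\rho_0^{(n)}\to 1$) — is the main bookkeeping step.

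The main obstacle I anticipate is not any single inequality but getting the index alignment right: one must verify that the infimum over $m\geq 0$ in the definition of $\rho_0^{(n)}$ is preserved under the peeling step (it is, because $v_k$ is already an infimum over $i\geq 0$, so the bound is uniform in the "window position" $m$), and that when we strip coordinate $m+n$ the resulting object is a $\rho_0^{(n-1)}$-type quantity for a cylinder of the same length $1$ but at a shifted location, with the extra coordinate now belonging effectively to the "frozen" part so that its contribution to the variation is governed by $v_{k+1}$ rather than $v_k$. Once that is pinned down, taking $\liminf_{n\to\infty}$ and invoking monotonicity of $S_n$ gives the claimed inequality; I would present the one-step lemma first, then the induction, then the passage to the limit.
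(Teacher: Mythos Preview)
Your outline has the right spirit but misses the key structural ingredient. First, your peeling identity is at the wrong end and is not valid as written: every $f_i$ with $m\le i\le m+n$ is $\B_{[0,i-1]^c}$-measurable and hence depends on $y_{m+n}$, so you cannot isolate $f_{m+n}$ alone when you split off coordinate $m+n$. The correct move is to strip the \emph{bottom} coordinate $m$, passing from $L_{m,m+n}$ to $L_{m+1,m+n}$, because $f_m$ is the only factor depending on $y_m$.

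More seriously, a one-index recursion for $q_n=\inf\rho_0^{(n)}$ cannot deliver the bound. After peeling, $f_m$ still depends on all of $y_{[m,m+n]}$, and the essential idea (the content of Lemma~\ref{lem:rb1}) is to decompose it in layers $f_m=f_m^{(k)}+\sum_{j>k}\phi_m^{(j)}$, where $f_m^{(j)}(w)=\inf\{f_m(z):z\in[x]_A\cap[w]_{[0,m+j]}\}$ is constant on $[m,m+j]$-cylinders and satisfies $f_m^{(j)}\ge v_j f_m$. Each layer contributes a term involving $\rho_{j-1}^{(n-1)}$, so the recursion is inherently two-indexed,
\[
\rho_k^{(n)}\ \ge\ v_k\,\rho_{k-1}^{(n-1)}+\sum_{j=k}^{n-1}(v_{j+1}-v_j)\,\rho_j^{(n-1)},
\]
coupling all cylinder lengths $k$. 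The convexity inequality $(c+g)/(c+h)\ge\min\{1,a\}$ you propose only captures the bottom layer and yields nothing better than $\rho_0^{(n)}\ge v_0$, with no improvement as $n\to\infty$. One must then symmetrise via Cauchy--Schwarz to get the same recursion for $\bigl(\rho_k^{(n)}(x,y)\rho_k^{(n)}(y,x)\bigr)^{1/2}$, and a separate algebraic step (Lemma~\ref{lem:rb2}) solves the deterministic two-index recursion by showing the limits $p_k=\lim_n p_k^{(n)}$ satisfy $p_k-p_{k+1}=v_k(p_{k-1}-p_k)$, whence $p_0\ge S/(1+S)$ with $S=\sum_k\prod_{j\le k}v_j$. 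Neither the two-parameter recursion nor this algebraic resolution appears in your sketch, and without them the argument does not close.
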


We split the proof of Proposition \ref{prop:rb} into two lemmas.
The proof of the first one is based on that of  \cite[Proposition 2.1]{B}.

\begin{lemma}\label{lem:rb1}
Let $\{f_n\}_{n=0}^\infty\subset B(X)$ and $A$ be as above,
and let $x,y\in X$ be such that $x_A=y_A$.
If $\{v_k\}_{k=0}^\infty$ is an increasing, positive sequence such that
\begin{equation*}
v_k\leq\inf_{i\geq0}\bar r_{A\cup[0,i+k]}(f_i)
\qquad(k\geq 0).
\end{equation*}
then
\begin{equation*}
\rho_k^{(n)}(x,y)
\geq v_k\rho_{k-1}^{(n-1)}(x,y)
+\sum_{j=k}^{n-1}(v_{j+1}-v_j)\rho_j^{(n-1)}(x,y)
\qquad(0\leq k\leq n).
\end{equation*}
\end{lemma}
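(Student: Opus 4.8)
The plan is to prove the recursion by a one-step decomposition of $L_{m,m+n}\1[\zeta]$ according to the value of the coordinate at the leftmost site $m$ of the block. Fix $m\geq 0$ and $\zeta\in S^{[m,m+k]}$, and write $L_{m,m+n}\1[\zeta](x)=\sum_{s\in S}L_{m+1,m+n}\bigl(f_m\cdot\1[s\zeta]\bigr)(x)$, where $s\zeta\in S^{[m,m+k]}$ (if $k\geq 0$) or $s\in S^{\{m\}}$ (if $k=0$, so the previous level is $\rho_{-1}$) records the coordinate at $m$. The point is that $f_m$ is $\B_{[0,m-1]^c}$-measurable, hence $\B_{\{m\}^c}$-measurable on the relevant cylinder, so on the set $[sx]_{[m+1,m+n]^c}$ the factor $f_m$ depends only on the coordinates in $[m+1,m+n]$ together with those outside $[m,m+n]$; comparing $x$ and $y$ (which agree on $A$) and using the hypothesis $v_{m'}\leq\bar r_{A\cup[0,i+m']}(f_i)$ with $i=m$ gives a lower bound $f_m\geq v_{\cdot}\,(\text{value at }y)$ for an appropriate shift of indices. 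Summing over $s$ and regrouping then produces the stated combination of $\rho_{k-1}^{(n-1)}$ and the $\rho_j^{(n-1)}$.

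Concretely, after the split I would Abel-sum (summation by parts) over $s\in S$: order does not matter, but the telescoping structure $v_k=v_k$, $v_{j+1}=v_j+(v_{j+1}-v_j)$ is what converts a single multiplicative bound into the sum $v_k\rho_{k-1}^{(n-1)}+\sum_{j=k}^{n-1}(v_{j+1}-v_j)\rho_j^{(n-1)}$. The heuristic is that one term of the $S$-sum can be bounded below using the coarsest ratio bound (giving $\rho_{k-1}^{(n-1)}$, since fixing one more coordinate at $m$ extends the cylinder $\zeta$ by one site on the left but we are comparing at level $n-1$), while the remaining terms are bounded using progressively finer ratio bounds $\rho_j^{(n-1)}$ for $j\geq k$, with the increments $v_{j+1}-v_j\geq 0$ (here the monotonicity of $\{v_k\}$ is essential) weighting them. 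This is exactly the device in \cite[Proposition 2.1]{B}, adapted to the present $L_{m,n}$ operators and to the set $A$.

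The main obstacle I anticipate is bookkeeping the index shifts correctly: when passing from the block $[m,m+n]$ to $[m+1,m+n]$, the cylinder specification $\zeta\in S^{[m,m+k]}$ must be matched against $S^{[m+1,(m+1)+(k-1)]}$ and $S^{[m+1,(m+1)+j]}$, and the bound on $f_m$ must be read off from $\bar r_{A\cup[0,m]}(f_0)$-type quantities via the defining inequality for $v_k$ — one has to check that the infimum over $i\geq 0$ and $m\geq 0$ in the definitions of $v_k$ and $\rho_k^{(n)}$ is robust under this shift. A secondary point requiring care is the boundary case $k=0$, where $\rho_{k-1}^{(n-1)}=\rho_{-1}^{(n-1)}$ is the version defined with the constant function $1$ rather than an indicator; one checks the decomposition still closes because $\sum_{s}\1[s](x)=1[x]$ pointwise, so no cylinder constraint survives at that level. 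Once the index matching is pinned down, the inequality follows by taking the infimum over $\zeta$ and $m$ on the left-hand side after the term-by-term lower bounds.
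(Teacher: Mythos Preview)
Your proposal misidentifies the core decomposition. You propose to split $L_{m,m+n}\1[\zeta]$ by summing over the value $s\in S$ of the coordinate at the leftmost site $m$, but $\zeta\in S^{[m,m+k]}$ already fixes that coordinate, so there is no such sum to perform; and ``Abel summation over $s\in S$'' has no clear meaning, since $S$ carries no order and all symbols play symmetric roles. Your claim that $f_m$ is $\B_{\{m\}^c}$-measurable on the relevant cylinder is also incorrect: $f_m$ is $\B_{[0,m-1]^c}$-measurable, meaning it is independent of coordinates $0,\ldots,m-1$ but \emph{does} depend on coordinate $m$ and on all coordinates to the right and left outside $[0,m-1]$. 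That dependence on the tail to the right of $m$ is precisely what has to be controlled.

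The actual mechanism (Berbee's device) is a decomposition of the \emph{function} $f_m$ itself, not of the alphabet. One introduces the coarse-grained approximations
\[
f_m^{(j)}(w)=\inf\bigl\{f_m(z):z\in[x]_A,\ z_{[0,m+j]}=w_{[0,m+j]}\bigr\},\qquad
\phi_m^{(j+1)}=f_m^{(j+1)}-f_m^{(j)}\geq 0,
\]
so that $f_m^{(j)}(w)$ depends only on $w_{[m,m+j]}$, one has $f_m^{(j)}\geq v_jf_m$ on $[y]_A$ by the hypothesis on $v_j$, and $f_m\geq f_m^{(k)}+\sum_{j=k+1}^n\phi_m^{(j)}$ by telescoping. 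After peeling off site $m$ via $L_{m,m+n}\1[\zeta](x)=L_{m+1,m+n}(f_m\1[\zeta])(\zeta_{\{m\}}x_{\{m\}^c})$, each piece $f_m^{(j)}\1[\zeta]$ (resp.\ $\phi_m^{(j)}\1[\zeta]$) factors through cylinders of length $j$ in the block $[m+1,m+n]$, and so is bounded below via $\rho_{j-1}^{(n-1)}$ times the corresponding quantity at $y$. An Abel rearrangement \emph{in the resolution index $j$} (not in $s$) then converts the resulting expression into $v_k\rho_{k-1}^{(n-1)}+\sum_{j=k}^{n-1}(v_{j+1}-v_j)\rho_j^{(n-1)}$. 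The telescoping structure you sensed is there, but it runs over the scale $j$ of these layered infima, not over $S$.
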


\begin{proof}
We denote $\rho_k^{(n)}(x,y)$ by $\rho_k^{(n)}$,
and $[\zeta]_{[i,j]}$ by $[\zeta]_i^j$
($\zeta\in S^\Lambda$, $[i,j]\subseteq\Lambda\subseteq\Z$).
Let
\begin{equation*}
f^{(k)}_m(w)
=\inf\bigl\{f_m(z)\,:\,z\in[x]_A\cap[w]_0^{m+k}\bigr\},
\end{equation*}
and
\begin{equation*}
\phi^{(k+1)}_m(w)=f^{(k+1)}_m(w)-f^{(k)}_m(w)
\qquad(k,m\geq 0,\ w\in X).
\end{equation*}
Note that
\begin{equation*}
f^{(k)}_m(w)\geq v_kf_m(w)\qquad(w\in[x]_A),
\end{equation*}
and
$$
f_m\geq f_m^{(k)}
+\sum_{j=k+1}^N\phi_m^{(j)}\qquad(N>k).
$$
Let $m\geq0$, $n\geq1$, and $\zeta\in S^{[m,m+k]}$ for some  $0\leq k\leq n$.
Since $f^{(j)}_m(w)$, $\phi_{m}^{(j)}(w)$ depend only on $w_{[m,m+j]}$,
we can identify them with the functions induced by
the natural projection onto $S^{[m,m+j]}$.
Then
\begin{align*}
L_{m+1,m+n}
\bigl(f^{(k)}_m\1[\zeta]\bigr)(\zeta_{\{m\}}x_{\{m\}^c})
&=f^{(k)}_m(\zeta)
L_{m+1,m+n}\bigl(\1[\zeta]_{m+1}^{m+k}\bigr)(x)\\
&\geq
\rho_{k-1}^{(n-1)}f^{(k)}_m(\zeta)
L_{m+1,m+n}\bigl(\1[\zeta]_{m+1}^{m+k}\bigr)(y)\\
&=
\rho_{k-1}^{(n-1)}
L_{m+1,m+n}\bigl(f^{(k)}_m\1[\zeta]\bigr)
(\zeta_{\{m\}}y_{\{m\}^c})\\
&\hskip 0.75 in\qquad(0\leq k\leq n,\,n\geq1,\,m\geq 0),
\end{align*}
and
\begin{multline*}
L_{m+1,m+n}
\bigl(\phi_{m}^{(j)}\1[\zeta]\bigr)(\zeta_{\{m\}}x_{\{m\}^c})\\
\shoveright{\begin{aligned}
&=\sum_{\eta\in S^{[m+k+1,m+j]}}\phi_{m}^{(j)}(\zeta\eta)
L_{m+1,m+n}\bigl(\1[\zeta\eta]_{m+1}^{m+j}\bigr)(x)\\
&\geq
\rho_{j-1}^{(n-1)}\sum_{\eta\in S^{[m+k+1,m+j]}}\phi_{m}^{(j)}(\zeta\eta)
L_{m+1,m+n}\bigl(\1[\zeta\eta]_{m+1}^{m+j}\bigr)(y)\\
&=\rho_{j-1}^{(n-1)}
L_{m+1,m+n}\bigl(\phi_{m}^{(j)}\1[\zeta]\bigr)
(\zeta_{\{m\}}y_{\{m\}^c})
\end{aligned}}\\
(1\leq k+1\leq j\leq n,\,m\geq 0).
\end{multline*}
Therefore,
\begin{align*}
L_{m,m+n}\1[\zeta](x)
&\geq
L_{m+1,m+n}
\bigl(f_{m}^{(k)}\1[\zeta]\bigr)
(\zeta_{\{m\}}x_{\{m\}^c})\\
&\qquad+\sum_{j=k+1}^n
L_{m+1,m+n}\bigl(\phi_{m}^{(j)}\1[\zeta]\bigr)
(\zeta_{\{m\}}x_{\{m\}^c})\\
&\geq
\rho_{k-1}^{(n-1)}L_{m+1,m+n}
\bigl(f_{m}^{(k)}\1[\zeta]\bigr)
(\zeta_{\{m\}}y_{\{m\}^c})\\
&\qquad+\sum_{j=k+1}^n\rho_{j-1}^{(n-1)}
L_{m+1,m+n}\bigl(\phi_{m}^{(j)}\1[\zeta]\bigr)
(\zeta_{\{m\}}y_{\{m\}^c})\\
&=
\sum_{j=k}^{n-1}\bigl(\rho_{j-1}^{(n-1)}-\rho_j^{(n-1)}\bigr)L_{m+1,m+n}
\bigl(f_{m}^{(j)}\1[\zeta]\bigr)
(\zeta_{\{m\}}y_{\{m\}^c})\\
&\qquad+\rho^{(n-1)}_{n-1}L_{m+1,m+n}\bigl(f_{m}^{(n)}\1[\zeta]\bigr)
(\zeta_{\{m\}}y_{\{m\}^c})\\
&\geq
\sum_{j=k}^{n-1}\bigl(\rho_{j-1}^{(n-1)}-\rho_j^{(n-1)}\bigr)v_j
L_{m+1,m+n}\bigl(f_{m}\1[\zeta]\bigr)
(\zeta_{\{m\}}y_{\{m\}^c})\\
&\qquad+\rho^{(n-1)}_{n-1}v_n
L_{m+1,m+n}\bigl(f_m\1[\zeta]\bigr)(\zeta_{\{m\}}y_{\{m\}^c})\\
&=
\left(\rho^{(n-1)}_{n-1}v_n
+\sum_{j=k}^{n-1}\bigl(\rho_{j-1}^{(n-1)}-\rho_j^{(n-1)}\bigr)v_j\right)
L_{m,m+n}\1[\zeta](y)
\end{align*}
for $0\leq k\leq n-1,\,m\geq 0$,
and
\begin{equation*}
L_{m,m+n}\1[\zeta](x)
=\rho^{(n-1)}_{n-1}v_nL_{m,m+n}\1[\zeta](y)
\end{equation*}
for $k=n\geq1$.
Hence,
\begin{equation*}
\rho_k^{(n)}
\geq \rho^{(n-1)}_{n-1}v_n
+\sum_{j=k}^{n-1}\bigl(\rho_{j-1}^{(n-1)}-\rho_j^{(n-1)}\bigr)v_j
\qquad(0\leq k\leq n,\,n\geq1)
\end{equation*}
(where the empty sum is taken to be 0).
Rearranging, we have
\begin{equation*}
\rho_k^{(n)}
\geq\rho_{k-1}^{(n-1)}v_k+\sum_{j=k}^{n-1}\rho_j^{(n-1)}(v_{j+1}-v_j)
\qquad(0\leq k\leq n,\,n\geq1),
\end{equation*}
as required.
\end{proof}

Now, given an increasing sequence $\{v_k\}_{k=0}^\infty$
such that $0<v_k\leq 1$ for all $k$,
consider $p_k^{(n)}$ $(-1\leq k\leq n,\,n\geq0)$,
defined recursively by
\begin{align}
p_{-1}^{(n)}&=1,\label{eq:rbr0}\\
p_0^{(0)}&=v_0,\label{eq:rbr1}\\
p^{(n)}_k&=v_kp_{k-1}^{(n-1)}+\sum_{j=k}^{n-1}(v_{j+1}-v_j)p_j^{(n-1)}
\qquad(0\leq k\leq n,\,n\geq1).\label{eq:rbr2}
\end{align}
(As before, when $k=n$, the empty sum is taken to be 0.)

\begin{lemma}\label{lem:rb2}
Let $p_k^{(n)}$ $(k\geq-1,\,n\geq0)$ and $\{v_k\}_{k=0}^\infty$
be as above. Then
\begin{equation*}
\lim_{n\to\infty}p^{(n)}_0
\geq\lim_{n\to\infty}\Biggl(\sum_{k=0}^n\prod_{j=0}^kv_j\Biggr)
\Biggl(1+\sum_{k=0}^n\prod_{j=0}^kv_j\Biggr)^{-1}.
\end{equation*}
\end{lemma}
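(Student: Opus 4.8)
The plan is to reduce the two-index recursion \eqref{eq:rbr0}--\eqref{eq:rbr2} to a scalar recursion for $r_n := p_0^{(n)}$ and then compare it term by term with $S_n/(1+S_n)$. Throughout write $\pi_k := \prod_{j=0}^k v_j$ (with $\pi_{-1}:=1$) and $S_n := \sum_{k=0}^n \pi_k$, so that the right-hand side of the asserted inequality is $\lim_{n\to\infty} S_n/(1+S_n)$, the limit existing because $S_n$ is non-decreasing and $S_n/(1+S_n)\le 1$.

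First I would examine the consecutive differences $d_k^{(n)} := p_{k-1}^{(n)} - p_k^{(n)}$ ($0\le k\le n$). Substituting \eqref{eq:rbr2} and using \eqref{eq:rbr0}--\eqref{eq:rbr1} at the endpoints, a direct computation shows that all but one term cancel, leaving the first-order relation $d_k^{(n)} = v_{k-1}\, d_{k-1}^{(n-1)}$ for $1\le k\le n$. Iterating this down to index $0$, and noting that $d_0^{(m)} = p_{-1}^{(m)} - p_0^{(m)} = 1 - r_m$ by \eqref{eq:rbr0}, gives $d_k^{(n)} = \pi_{k-1}(1 - r_{n-k})$ for $0\le k\le n$. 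Meanwhile \eqref{eq:rbr1}--\eqref{eq:rbr2} with $k=n$ give $p_n^{(n)} = v_n p_{n-1}^{(n-1)} = \cdots = \pi_n$. Telescoping $r_n - p_n^{(n)} = \sum_{k=1}^n d_k^{(n)}$ and reindexing then yields the closed recursion
\begin{equation*}
r_n = \pi_n + \sum_{k=1}^n \pi_{k-1}(1 - r_{n-k}) = S_n - \sum_{m=0}^{n-1} \pi_{n-1-m}\, r_m \qquad (n\ge 0),
\end{equation*}
with the empty sum understood as $0$ when $n=0$ (so $r_0 = S_0 = v_0$).

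Next I would establish that $r_n$ is non-decreasing in $n$, which I do by proving, by induction on $n$, the stronger statement that $p_k^{(n)} \ge p_k^{(n-1)}$ for all $-1\le k\le n-1$. The case $k=-1$ is trivial since $p_{-1}^{(\cdot)}\equiv 1$; for $n\ge 2$ and $0\le k\le n-1$, subtracting the instances of \eqref{eq:rbr2} at levels $n$ and $n-1$ gives
\begin{equation*}
p_k^{(n)} - p_k^{(n-1)} = v_k\bigl(p_{k-1}^{(n-1)} - p_{k-1}^{(n-2)}\bigr) + \sum_{j=k}^{n-2}(v_{j+1}-v_j)\bigl(p_j^{(n-1)} - p_j^{(n-2)}\bigr) + (v_n - v_{n-1})\,p_{n-1}^{(n-1)},
\end{equation*}
which is $\ge 0$ by the induction hypothesis together with $v_k>0$, the monotonicity of $\{v_j\}$, and $p_{n-1}^{(n-1)} = \pi_{n-1}>0$; the base case $n=1$ is the direct computation $p_0^{(1)} - p_0^{(0)} = (v_1-v_0)v_0 \ge 0$.

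Finally, for each $n$ the closed recursion gives $S_n - r_n = \sum_{m=0}^{n-1} \pi_{n-1-m}\, r_m$, and since $r_m\le r_n$ for $m\le n$ by the monotonicity just proved,
\begin{equation*}
S_n - r_n = \sum_{m=0}^{n-1} \pi_{n-1-m}\, r_m \le r_n \sum_{m=0}^{n-1} \pi_{n-1-m} \le r_n S_n,
\end{equation*}
i.e.\ $r_n(1+S_n)\ge S_n$, i.e.\ $p_0^{(n)} = r_n \ge S_n/(1+S_n)$. As $r_n$ is non-decreasing and bounded by $1$ it converges, and letting $n\to\infty$ gives the lemma. I expect the main obstacle to be the first step: spotting that the consecutive differences obey $d_k^{(n)} = v_{k-1}d_{k-1}^{(n-1)}$ is what collapses the whole array onto its diagonal values $\pi_n$ and turns the estimate into a scalar convolution recursion; once that recursion and the monotonicity of $r_n$ are in hand, the comparison with $S_n/(1+S_n)$ is immediate.
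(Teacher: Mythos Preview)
Your argument is correct. The difference identity $d_k^{(n)}=v_{k-1}d_{k-1}^{(n-1)}$ follows exactly as you indicate (the sums over $j\ge k$ and $j\ge k-1$ in \eqref{eq:rbr2} cancel except for the $j=k-1$ term, which combines with the $v_k$-terms to give $v_{k-1}(p_{k-2}^{(n-1)}-p_{k-1}^{(n-1)})$), and the resulting convolution identity $S_n-r_n=\sum_{m=0}^{n-1}\pi_{n-1-m}r_m$ together with $r_m\le r_n$ yields the non-asymptotic bound $p_0^{(n)}\ge S_n/(1+S_n)$ for every $n$. One small point you assert without proof is $r_n\le 1$; this follows immediately by induction on $n$ from \eqref{eq:rbr2}, since $p_k^{(n)}\le v_k+\sum_{j=k}^{n-1}(v_{j+1}-v_j)=v_n\le1$ whenever all $p_j^{(n-1)}\le1$.

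The paper takes a closely related but slightly different route: it first establishes the same monotonicity $p_k^{(n+1)}\ge p_k^{(n)}$, then passes to the limits $p_k=\lim_n p_k^{(n)}$ and observes that the limiting recursion $p_k=v_kp_{k-1}+\sum_{j\ge k}(v_{j+1}-v_j)p_j$ gives $p_k-p_{k+1}=v_k(p_{k-1}-p_k)$, whence $p_0-p_{n+1}=(1-p_0)S_n$ and the bound follows from $p_{n+1}\ge0$. Your identity $d_k^{(n)}=v_{k-1}d_{k-1}^{(n-1)}$ is the finite-$n$ analogue of the paper's limiting relation $p_k-p_{k+1}=v_k(p_{k-1}-p_k)$; by staying at the finite level you avoid the intermediate limiting objects $p_k$ and obtain the sharper pointwise inequality $p_0^{(n)}\ge S_n/(1+S_n)$ valid for each $n$, at the cost of a slightly longer computation (the convolution recursion). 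Either way, the monotonicity step is the same and the underlying telescoping idea is identical.
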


\begin{proof}
It follows from \eqref{eq:rbr2} that for $k\leq n$,
$$
p^{(n+1)}_k-p^{(n)}_k
=v_k(p_{k-1}^{(n)}-p_{k-1}^{(n-1)})
+(v_{n+1}-v_n)p_n^{(n)}
 +\sum_{j=k}^{n-1}(v_{j+1}-v_j)(p_j^{(n)}-p_j^{(n-1)}),
$$
and from \eqref{eq:rbr0}--\eqref{eq:rbr1} that
\begin{equation*}
p_0^{(1)}-p_0^{(0)}=(v_1-v_0)v_0\geq0.
\end{equation*}
Hence, it follows inductively that
$p^{(n+1)}_n\geq p^{(n)}_n$ for all $n\geq0$,
and then that
$p_k^{(n+1)}\geq p_k^{(n)}$ for all $0\leq k\leq n$.
Thus
$$
p_k=\lim_{n\to\infty}p_k^{(n)}\qquad(k\geq -1)
$$
is well defined, and
\begin{equation*}
p_k=v_kp_{k-1}+\sum_{j=k}^\infty(v_{j+1}-v_j)p_j
\qquad(k\geq 0).
\end{equation*}
Therefore,
\begin{align*}
p_k-p_{k+1}
&=v_kp_{k-1}-v_{k+1}p_k+(v_{k+1}-v_k)p_k\\
&=v_k(p_{k-1}-p_k),
\end{align*}
and so,
$$
p_k-p_{k+1}=(1-p_0)\prod_{j=0}^kv_j.
$$
Summing gives
\begin{equation*}
p_0-p_{n+1}=(1-p_0)\sum_{k=0}^n\prod_{j=0}^kv_j.
\end{equation*}
Since $p_{n+1}\geq0$,
\begin{equation*}
p_0\geq\Biggl(\sum_{k=0}^n\prod_{j=0}^kv_j\Biggr)
\Biggl(1+\sum_{k=0}^n\prod_{j=0}^kv_j\Biggr)^{-1},
\end{equation*}
from which the result follows.
\end{proof}

\begin{proof}[Proof of Proposition \ref{prop:rb}]
It follows from Lemma \ref{lem:rb1}
and the Cauchy-Schwartz inequality that
\begin{multline}\label{eq:rbcs}
\bigl(\rho_k^{(n)}(x,y)\rho_k^{(n)}(y,x)\bigr)^{\frac{1}{2}}\\
\geq
v_k\bigl(\rho_{k-1}^{(n-1)}(x,y)\rho_{k-1}^{(n-1)}(y,x)\bigr)
^{\frac{1}{2}}+\sum_{j=k}^{n-1}(v_{j+1}-v_j)
\bigl(\rho_j^{(n-1)}(x,y)\rho_j^{(n-1)}(y,x)\bigr)^{\frac{1}{2}}\\
\qquad(0\leq k\leq n,\,n\geq1)
\end{multline}
for any $x,y\in X$ such that $x_A=y_A$.
Note also that $\rho_{-1}^{(n)}(x,y)\rho_{-1}^{(n)}(y,x)=1$
for all $n\geq 0$ and $x,y\in X$.
Define $p^{(n)}_k$ $(-1\leq k\leq n,\,n\geq 0)$ inductively
as in \eqref{eq:rbr0}--\eqref{eq:rbr2} with
$\{v_k\}$ as in the statement of Proposition \ref{prop:rb}.
Then it follows from \eqref{eq:rbr0}--\eqref{eq:rbcs} that
\begin{equation*}
0\leq p_k^{(n)}
\leq\inf_{x_A=y_A}
\bigl(\rho_k^{(n)}(x,y)\rho_k^{(n)}(y,x)\bigr)^{\frac{1}{2}}
\leq 1\qquad(-1\leq k\leq n,\,n\geq 0).
\end{equation*}
The result now follows from Lemma \ref{lem:rb2}.
\end{proof}

\section{Gibbs states and {\rm g}-chains}

Let
$$
\G_X=\biggl\{g\in B(X)\,:\,g\text{ is $\B_{\Z^-}$-measurable},
\ g\geq0,\ \sum_{s\in S}g(s_0x)=1\ \forall x\in X\biggr\},
$$
and let $g\in\G_X$. A measure $\mu\in M(X)$ is a $g$-chain if
$$
\mu\bigl([x]_{\{n\}}\big|\,\B_{(-\infty,n-1]}\bigr)(x)
=g(T^nx)\qquad\hbox{a.e.}(\mu)
$$
for all $n\in\Z$.
A $T$-invariant $g$-chain is usually referred to as a $g$-measure.
If $g\in\G_X$ is continuous, there is always at least
one $g$-measure.

\begin{proposition}\label{prop:g}
Let $\V=\{\phi_\Lambda\}_{\Lambda\in\C}$
be a positive, continuous, $T$-invariant specification.
If $\pi^\V_{[0,n]}\bigl(\,[x]_{\{0\}}\big|\,x\bigr)$
converges to a continuous limit in $x\in X$ as $n\to\infty$, then
$$
g_\V(x)=\lim_{n\to\infty}\pi^\V_{[0,n]}\bigl(\,[x]_{\{0\}}\big|\,x\bigr)
\qquad(x\in X)
$$
is a positive, continuous function in $\G_X$,
and $\mu\in\G(\V)$ if and only if $\mu$ is a $g_\V$-chain.
\end{proposition}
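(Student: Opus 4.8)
The plan is to show the two halves of the equivalence separately, with the definition of $g_\V$ and its membership in $\G_X$ dealt with first. Let $\pi_n(x) = \pi^\V_{[0,n]}([x]_{\{0\}}\,|\,x)$. Since $\V$ is positive, each $\pi_n$ is a strictly positive continuous function, and $\sum_{s\in S}\pi^\V_{[0,n]}([s_0x]_{\{0\}}\,|\,x)=1$ for every $x$ by the normalisation built into $\pi^\V_{[0,n]}$; passing to the limit, $g_\V$ satisfies $\sum_{s\in S}g_\V(s_0x)=1$. Positivity of $g_\V$ requires a lower bound uniform in $n$: using \eqref{1aii} to factor $\phi_{[0,n]}=\phi_{\{0\}}\cdot(\text{sum over }y\in[x]_{\{0\}^c})$ one gets $\pi_n(x)=\sum_{w}\phi_{\{0\}}(w)\,q_n(w)$ where the $q_n$ are probability weights on the finite fibre $[x]_{\{0\}}$, so $\pi_n(x)\ge\min_{w\in[x]_{\{0\}}}\phi_{\{0\}}(w)>0$; by compactness of $X$ and continuity of $\phi_{\{0\}}$ this is bounded below by a positive constant, so $g_\V>0$. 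For $\B_{\Z^-}$-measurability of $g_\V$: $\pi^\V_{[0,n]}([x]_{\{0\}}\,|\,x)$ as a function of $x$ depends only on $x_{[0,n]^c}$ and on the single coordinate $x_0$ — more precisely it is a function of $x_{(-\infty,-1]}$, $x_0$, and $x_{[n+1,\infty)}$ — but after taking the limit $n\to\infty$ the dependence on the coordinates $x_{[n+1,\infty)}$ must wash out; one argues that the limit, being independent of $n$, can be computed along any subsequence and hence cannot depend on any coordinate $x_j$ with $j\ge 1$, giving $\B_{(-\infty,0]}=\B_{\Z^-}$-measurability. (This is essentially the observation that $g_\V$, defined as a limit that exists for every $x$, is a tail-in-the-positive-direction limit.)

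Next, the "only if" direction: suppose $\mu\in\G(\V)$, and fix $n\in\Z$; we must show $\mu([x]_{\{n\}}\,|\,\B_{(-\infty,n-1]})(x)=g_\V(T^nx)$ a.e. By $T$-invariance of $\V$ and a change of variables it suffices to treat $n=0$. For any $m\ge 0$, the martingale / tower property gives
\[
\mu\bigl([x]_{\{0\}}\bigm|\B_{(-\infty,-1]}\bigr)
=\mu\Bigl(\mu\bigl([x]_{\{0\}}\bigm|\B_{[0,m]^c}\bigr)\Bigm|\B_{(-\infty,-1]}\Bigr).
\]
Because $\mu\in\G(\V)$, the inner conditional expectation is $\pi^\V_{[0,m]}([x]_{\{0\}}\,|\,x)$ a.e. As $m\to\infty$ this converges uniformly (hence in $L^1(\mu)$) to $g_\V(x)$ by hypothesis, and $g_\V$ is already $\B_{(-\infty,-1]}$-measurable by the previous paragraph, so the right-hand side converges to $g_\V(x)$; thus $\mu([x]_{\{0\}}\,|\,\B_{(-\infty,-1]})=g_\V(x)$ a.e. Repeating for all $n$ via $T$-invariance shows $\mu$ is a $g_\V$-chain.

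For the "if" direction: suppose $\mu$ is a $g_\V$-chain; we must recover $\mu([x]_\Lambda\,|\,\B_{\Lambda^c})=\phi_\Lambda$ for every $\Lambda\in\C$. It suffices to do this for intervals $\Lambda=[0,n]$ (general $\Lambda\in\C$ follow because a specification is determined by the $\phi_{\{k\}}$, equivalently by any cofinal family of intervals, together with \eqref{1aii}). The $g_\V$-chain property iterated gives, for intervals $[k,n]$, that
\[
\mu\bigl([x]_{[k,n]}\bigm|\B_{(-\infty,k-1]}\bigr)(x)
=\prod_{j=k}^{n}g_\V(T^jx)\qquad\text{a.e.},
\]
by conditioning successively on $\B_{(-\infty,j-1]}$ for $j=n,n-1,\dots,k$. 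Conditioning this further down on $\B_{[0,n]^c}=\B_{(-\infty,-1]}\vee\B_{[n+1,\infty)}$ and using that $\phi_{[0,n]}$ is the unique $\B_{[0,n]^c}$-measurable normalisation, I would show the resulting kernel equals $\pi^\V_{[0,n]}([x]_{[0,n]}\,|\,x)=\phi_{[0,n]}(x)$; the key identity is that the products $\prod_{j=0}^n g_\V(T^jx)$, suitably normalised over $y\in[x]_{[0,n]^c}$, reproduce \eqref{eq:fdef} with $f_j$ replaced by $g_\V\circ T^j$ — here one uses that $\pi^\V_{[0,m]}([x]_{\{0\}}\,|\,x)\to g_\V$ to identify $g_\V$ as the "one-sided $f_0$" of a specification sequence and invoke \eqref{eq:spec}. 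Finally, one must pass from the half-line conditioning to genuine $\B_{[0,n]^c}$-conditioning, i.e. control the dependence on $x_{[n+1,\infty)}$; this is handled by another martingale limit, letting the left endpoint of the conditioning interval go to $-\infty$ and the cylinder on the right grow, using continuity of everything and compactness.

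I expect the main obstacle to be precisely this last point in the "if" direction: showing that a $g_\V$-chain — whose defining property only constrains conditioning on the \emph{past} $\B_{(-\infty,n-1]}$ — actually satisfies the two-sided Gibbs relation, conditioning on $\B_{\Lambda^c}$ which also includes the future. The bridge is that $g_\V$ was obtained as the limit of $\pi^\V_{[0,n]}([x]_{\{0\}}\,|\,x)$, so the future-conditional and past-conditional descriptions are forced to agree in the limit; making this rigorous requires the uniform convergence hypothesis and a careful martingale argument, together with the remark after \eqref{1aii} that verifying the specification on a cofinal sequence of intervals suffices. Everything else is bookkeeping with conditional expectations and the identities \eqref{1aii}, \eqref{eq:spec}, \eqref{eq:fdef}.
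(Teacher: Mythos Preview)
Your treatment of the preliminary properties of $g_\V$ and the ``only if'' direction is essentially the paper's argument: the tower property reduces $\mu([x]_{\{0\}}\,|\,\B_{(-\infty,-1]})$ to an average of $\pi^\V_{[0,m]}([x]_{\{0\}}\,|\,x)$, and the limit identifies this with $g_\V$.

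The ``if'' direction, however, has a genuine gap. Your plan is to start from
\[
\mu\bigl([x]_{[0,n]}\,\big|\,\B_{(-\infty,-1]}\bigr)=\prod_{j=0}^{n}g_\V(T^jx)
\]
and then ``condition further down'' on $\B_{[0,n]^c}$. But $\B_{(-\infty,-1]}\subset\B_{[0,n]^c}$, so the tower property runs the wrong way: from $\mu(\cdot\,|\,\B_{(-\infty,-1]})$ you cannot recover $\mu(\cdot\,|\,\B_{[0,n]^c})$. Your fallback idea of treating $\{[0,n],\,g_\V\circ T^n\}$ as a specification sequence and invoking \eqref{eq:fdef}/\eqref{eq:spec} also fails: a specification sequence requires $f_n$ to be $\B_{[0,n-1]^c}$-measurable, whereas $g_\V\circ T^n$ is $\B_{(-\infty,n]}$-measurable and depends on $x_0,\dots,x_{n-1}$.

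The paper avoids this by working with a \emph{single} site and approaching $\B_{\{n\}^c}$ from below by the increasing family $\B_{(-\infty,m]\setminus\{n\}}$, $m\geq n$. Forward martingale convergence gives
\[
\mu\bigl([x]_{\{n\}}\,\big|\,\B_{\{n\}^c}\bigr)
=\lim_{m\to\infty}\mu\bigl([x]_{\{n\}}\,\big|\,\B_{(-\infty,m]\setminus\{n\}}\bigr)
=\lim_{m\to\infty}
\frac{\prod_{i=n}^{m}g_\V(T^ix)}
{\sum_{s\in S}\prod_{i=n}^{m}g_\V(T^i(s_nx))}
\quad\text{a.e.}(\mu),
\]
the second equality coming directly from the $g_\V$-chain property (these $\sigma$-algebras are one-sided, so the chain rule applies). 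One then substitutes $g_\V=\lim_{k}\pi^\V_{[0,k]}([\,\cdot\,]_{\{0\}}\,|\,\cdot)$ into $\prod_{i=n}^{m}g_\V(T^ix)$, rewrites the product as an average over $w\in[x]_{[m+1,k]^c}$ of $\pi^\V_{\{n\}}([x]_{\{n\}}\,|\,w)$ against a probability kernel, and uses continuity of $\phi_{\{n\}}$ to bound the ratio
\[
\frac{\mu([s]_n\,|\,\B_{\{n\}^c})(x)}{\mu([t]_n\,|\,\B_{\{n\}^c})(x)}
\ \leq\ \frac{\phi_{\{n\}}(s_nx)}{\phi_{\{n\}}(t_nx)}
\qquad(s,t\in S).
\]
By symmetry this is an equality, and normalisation gives $\mu([s]_n\,|\,\B_{\{n\}^c})=\phi_{\{n\}}(s_nx)$ a.e.; since a positive specification is determined by the $\phi_{\{n\}}$, this suffices. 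The two missing ideas in your sketch are thus (i) exhausting $\B_{\{n\}^c}$ by one-sided $\sigma$-algebras so the $g$-chain property applies at every stage, and (ii) comparing ratios rather than trying to identify the full kernel directly.
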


\begin{proof}
Let
\begin{equation}\label{eq:propg1}
g_\V(x)=\lim_{n\to\infty}\pi^\V_{[0,n]}\bigl(\,[x]_{\{0\}}\big|\,x\bigr)
\qquad(x\in X)
\end{equation}
as above; by hypothesis, $g_\V$ is continuous.
Note that $g_\V(x)$ is $\B_{\Z^-}$-measurable,
and since
$$
\inf\phi_{\{0\}}
\leq\pi^\V_{[0,n]}\bigl(\,[x]_{\{0\}}\big|\,x\bigr)
\leq\sup\phi_{\{0\}}
\qquad(n\geq0,\,x\in X),
$$
it follows that  $g_\V$ is positive.
Clearly, $\sum_{s\in S}g_\V(s_0x) =1$ for all $x\in X$,
and so $g_\V\in\G_X$.
Moreover, it follows from the Martingale Theorem that
$$
\mu\bigl(\,[x]_{\{n\}}\big|\,\B_{(-\infty,n-1]}\bigr)(x)=g_\V(T^nx)
\qquad\text{a.e.}\,(\mu)
$$
for any Gibbs state $\mu$ of $\V$ when $n=0$,
and hence, by the $T$-invariance of $\V$, for all $n\in\Z$;
thus, any Gibbs state is a $g_\V$-chain.

Conversely, if $\mu$ is a $g_\V$-chain, then
\begin{align*}
\mu\bigl(\,[x]_{\{n\}}\big|\,\B_{\{n\}^c}\bigr)(x)
&=\lim_{m\to\infty}\mu\bigl(\,[x]_{\{n\}}\big|\,\B_{(-\infty,m]\setminus\{n\}}\bigr)(x)\\
&=\lim_{m\to\infty}\frac{\prod_{i=n}^mg_\V(T^ix)}
{\sum_{s\in S}\prod_{i=n}^mg_\V(T^i(s_nx))}
\qquad\text{a.e.}\,(\mu)
\quad(n\in\Z).
\end{align*}
Given $n\in\Z$, consider $k>m>n$.
From \eqref{eq:propg1} we have
\begin{align*}
\prod_{i=n}^mg_\V(T^ix)
&=\lim_{k\to\infty}\prod_{i=n}^m\pi^\V_{[i,k]}\bigl([x]_{\{i\}}\big|\,x\bigr)\\
&=\lim_{k\to\infty}\pi^\V_{[n,k]}\bigl([x]_{[n,m]}\big|\,x\bigr)\\
&=\lim_{k\to\infty}\sum_{w\in[x]_{[m+1,k]^c}}
\pi^\V_{\{n\}}\bigl([x]_{\{n\}}\big|\,w\bigr)\,
\pi^\V_{[n+1,k]}\bigl([w]_{[n+1,k]}\big|\,x\bigr)
\end{align*} for all $x\in X$.
Therefore, for all $s,t\in S$, $x\in X$,
\begin{align*}
\frac{\mu\bigl([s]_n\,\big|\,\B_{\{n\}^c}\bigr)(x)}
{\mu\bigl([t]_n\,\big|\,\B_{\{n\}^c}\bigr)(x)}
&\leq\lim_{m\to\infty}\lim_{k\to\infty}\sup_{w\in[x]_{[m+1,k]^c}}
\frac{\pi^\V_{\{n\}}\bigl([s]_n\,\big|\,w\bigr)}
{\pi^\V_{\{n\}}\bigl([t]_n\,\big|\,w\bigr)}\\
&=\frac{\phi_{\{n\}}(s_nx)}{\phi_{\{n\}}(t_nx)}
\qquad\text{a.e.}\,(\mu).
\end{align*}
It follows that
$\mu\bigl([s]_n\big|\B_{\{n\}^c}\bigr)(x)=\phi_{\{n\}}(s_nx)$
a.e.$(\mu)$ for all $s\in S$, $n\in\Z$,
and so $\mu\in\G(\V)$.
\end{proof}

\begin{theorem}\label{thm:g}
Let $\V=\{\phi_\Lambda\}_{\Lambda\in\C}$
be a positive, continuous, $T$-invariant specification
with specification sequence $\{[0,n],f_n\}_{n=0}^\infty$. If
$$
\sum_{k=0}^\infty\prod_{j=0}^k\inf_{i\geq0}\bar r_{(-\infty,i+j]}(f_i)
=\infty,
$$
then
$$
g_\V(x)=\lim_{n\to\infty}\pi^\V_{[0,n]}\bigl(\,[x]_{\{0\}}\,\big|\,x\bigr)
$$
is well defined, the convergence is uniform in $x$,
and it determines a positive function $g_\V\in\G_X\cap C(X)$
such that
\begin{equation*}
r_{[-n,0]}(g_\V)\leq \left(1+R_n^{-1}\right)^2,
\end{equation*}
where
$$
R_n=\sum_{k=0}^\infty\prod_{j=0}^k\inf_{i\geq0}\bar r_{[-n,i+j]}(f_i)
\qquad(n\geq0).
$$
Moreover, $\mu\in\G(\V)$ if and only if $\mu$ is a $g_\V$-chain.
\end{theorem}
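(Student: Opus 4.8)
The plan is to derive Theorem~\ref{thm:g} from Proposition~\ref{prop:g} together with the ratio bound in Proposition~\ref{prop:rb}, applied with $A=\emptyset$ (so $\N^-$ is replaced by the empty set and the relevant half-lines $A\cup[0,i+k]=[0,i+k]$ become $(-\infty,i+k]$ after using the $T$-invariance via \eqref{eq:f+var}). Concretely, for the specification sequence $\{[0,n],f_n\}$ one has $\pi^\V_{[0,n]}([x]_{\{0\}}\mid x)=L_{1,n}(\1[x]_{\{0\}})(x)/L_{0,n}\1(x)$, a ratio of the operators $L_{m,n}$ introduced in Section~\ref{sec:rb}; so to show the limit exists uniformly it suffices to control the oscillation of these ratios in $x$, and that is exactly what $\rho_0^{(n)}$ measures. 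First I would set $v_k=\inf_{i\ge0}\bar r_{(-\infty,i+k]}(f_i)$, observe that the hypothesis $\sum_k\prod_{j=0}^k v_j=\infty$ forces the right-hand side of Proposition~\ref{prop:rb} to equal $1$, and conclude $\inf_{x,y}\rho_0^{(n)}(x,y)\rho_0^{(n)}(y,x)\to1$; since each factor lies in $(0,1]$ this gives $\rho_0^{(n)}(x,y)\to1$ uniformly in $x,y$ with $x_\emptyset=y_\emptyset$, i.e.\ uniformly in all $x,y$.

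Next I would turn this contraction statement into uniform convergence of the conditional probabilities. Writing $g^{(n)}(x)=\pi^\V_{[0,n]}([x]_{\{0\}}\mid x)$, the key algebraic point is that $g^{(n)}$ can be expressed (via \eqref{eq:fdef} or directly from the definition of $L_{m,n}$) as a ratio $L_{1,n+1}(\1[x_0]_0)(\,\cdot\,)/\sum_{s}L_{1,n+1}(\1[s]_0)(\,\cdot\,)$ evaluated along $x$, and comparing $g^{(n)}(x)$ with $g^{(m)}(x)$ for $m>n$ amounts to comparing the operator $L_{1,n+1}$ with $L_{1,m+1}$ acting on cylinder indicators; the oscillation of these across the $S^{[1,n+1]}$-fibres is bounded by $1-\rho_0^{(n)}$-type quantities, so $\{g^{(n)}\}$ is uniformly Cauchy and hence converges uniformly to a continuous limit $g_\V$. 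Once the limit is known to be continuous, Proposition~\ref{prop:g} immediately gives $g_\V\in\G_X\cap C(X)$, positivity (from the two-sided bound $\inf\phi_{\{0\}}\le g^{(n)}\le\sup\phi_{\{0\}}$), and the equivalence ``$\mu\in\G(\V)\iff\mu$ is a $g_\V$-chain''.

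For the variation estimate $r_{[-n,0]}(g_\V)\le(1+R_n^{-1})^2$, I would run the same ratio-bound machinery but now with $A=[-n,-1]$, i.e.\ with $v_k$ replaced by $v_k^{(n)}=\inf_{i\ge0}\bar r_{[-n,i+k]}(f_i)$ (using \eqref{eq:f+var} to pass from $A\cup[0,i+k]$ to $[-n,i+k]$). Proposition~\ref{prop:rb} with this choice of $A$ gives
$$
\liminf_{n'\to\infty}\inf_{x_{[-n,-1]}=y_{[-n,-1]}}\rho_0^{(n')}(x,y)\rho_0^{(n')}(y,x)
\ge\Bigl(R_n\,(1+R_n)^{-1}\Bigr)^2,
$$
and since for $x,y$ with $x_{[-n,0]}=y_{[-n,0]}$ (in particular $x_0=y_0$) the ratio $g_\V(x)/g_\V(y)$ is a limit of ratios of the form $L_{1,n'+1}(\1[x_0]_0)(x)/L_{1,n'+1}(\1[x_0]_0)(y)$, which lie between $\rho_0^{(n')}(x,y)$ and $\rho_0^{(n')}(x,y)^{-1}$ up to the error from the normalising denominators, one gets $\bar r_{[-n,0]}(g_\V)\ge R_n^2(1+R_n)^{-2}$, i.e.\ $r_{[-n,0]}(g_\V)\le(1+R_n)^2R_n^{-2}=(1+R_n^{-1})^2$.

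The main obstacle is the bookkeeping in the second paragraph: translating the abstract contraction $\rho_0^{(n)}\to1$ into a genuine uniform-Cauchy statement for the sequence $g^{(n)}$, because $g^{(n)}$ is a \emph{normalised} ratio (numerator over $\sum_{s\in S}$ of shifted numerators), so one must check that the denominators do not spoil the estimate — this is where the uniform positivity bound $\inf\phi_{\{0\}}>0$ and the fact that $\rho_0^{(n)}(x,y)\rho_0^{(n)}(y,x)\le1$ with product $\to1$ (hence each factor $\to1$) are used together. The same care is needed to extract the clean exponent $2$ in the variation bound rather than a cruder constant. Everything else — checking $g_\V\in\G_X$, $\B_{\Z^-}$-measurability, and the Gibbs/$g$-chain equivalence — is handed to us by Proposition~\ref{prop:g} once uniform convergence to a continuous function is in hand.
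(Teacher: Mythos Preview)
Your choice $A=\emptyset$ in the first application of Proposition~\ref{prop:rb} is the genuine gap. With $A=\emptyset$ the quantities $v_k$ in that proposition are $\inf_{i\ge0}\bar r_{[0,i+k]}(f_i)$, not $\inf_{i\ge0}\bar r_{(-\infty,i+k]}(f_i)$; since $[0,i+k]\subset(-\infty,i+k]$ one has $\bar r_{[0,i+k]}(f_i)\le\bar r_{(-\infty,i+k]}(f_i)$, so the divergence hypothesis of the theorem says nothing about $\sum_k\prod_{j\le k}v_j$ for your $v_k$. Your appeal to \eqref{eq:f+var} does not repair this: that identity is derived only for the particular specification sequence built in \eqref{eq:fbar}--\eqref{eq:fs} from $\phi_{\{0\}}$ with a fixed reference $s\in S$, whereas Theorem~\ref{thm:g} is stated for an \emph{arbitrary} specification sequence $\{[0,n],f_n\}$. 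The correct choice, which the paper makes, is $A=\N^-$: then $A\cup[0,i+k]=(-\infty,i+k]$ on the nose, the $v_k$ match the hypothesis, and the right-hand side of Proposition~\ref{prop:rb} equals~$1$.

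With $A=\N^-$ the conclusion of Proposition~\ref{prop:rb} only controls $\rho_0^{(n)}(x,y)\rho_0^{(n)}(y,x)$ for pairs with $x_{\N^-}=y_{\N^-}$, so your ``uniformly over all $x,y$'' route is unavailable. The paper avoids your uniform-Cauchy bookkeeping entirely by a monotonicity argument: from
\[
\rho_0^{(n)}(x,y)\rho_0^{(n)}(y,x)\le
\frac{\pi^\V_{[0,n]}\bigl([x]_{\{0\}}\mid x\bigr)}{\pi^\V_{[0,n]}\bigl([x]_{\{0\}}\mid y\bigr)}
\]
one obtains that the oscillation of $y\mapsto\pi^\V_{[0,n]}([x]_{\{0\}}\mid y)$ over $[x]_{\N^-}$ tends to~$0$ uniformly in~$x$; but by consistency of the specification, $\sup_{y\in[x]_{\N^-}}\pi^\V_{[0,n]}([x]_{\{0\}}\mid y)$ is nonincreasing in~$n$ and the corresponding infimum nondecreasing, so both converge to a common limit $g_\V(x)$, and the oscillation bound gives uniform convergence directly. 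This sidesteps the normalising-denominator issue you flagged. Your treatment of the variation estimate via $A=[-n,-1]$ is correct and is exactly what the paper does.
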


\begin{proof}
Given $\{f_n\}$,
let $\rho^{(n)}_k(\,\cdot\,,\,\cdot\,)$ be as in Section \ref{sec:rb}.
Note that
$$
\rho_0^{(n)}(x,y)\rho_0^{(n)}(y,x)
\leq\frac{\pi^\V_{[0,n]}\bigl(\,[x]_{\{0\}}\,\big|\,x\bigr)}
{\pi^\V_{[0,n]}\bigl(\,[x]_{\{0\}}\,\big|\,y\bigr)}.
$$
Then since $\V$ is positive and continuous,
Proposition \ref{prop:rb} (with $A=\N^-$) implies that
\begin{equation}\label{eq:uc}
\lim_{n\to\infty}\ \sup\,
\Bigl\{\pi^\V_{[0,n]}\bigl(\,[x]_{\{0\}}\,\big|\,x\bigr)
-\pi^\V_{[0,n]}\bigl(\,[x]_{\{0\}}\,\big|\,y\bigr)
\,:\,x,y\in X,\,x_{\N^-}=y_{\N^-}\Bigr\}
=0.
\end{equation}
Given $x\in X$, consider the sequences
$$
\sup_{y\in[x]_{\N^-}}
\pi^\V_{[0,n]}\bigl(\,[x]_{\{0\}}\,\big|\,y\bigr),
\qquad\inf_{y\in[x]_{\N^-}}
\pi^\V_{[0,n]}\bigl(\,[x]_{\{0\}}\,\big|\,y\bigr)
\qquad(n\geq0),
$$
which are respectively decreasing, increasing in $n$,
and hence convergent. It follows from \eqref{eq:uc} that
they converge to the same limit, and so
$$
g_\V(x)
=\lim_{n\to\infty}\pi^\V_{[0,n]}\bigl(\,[x]_{\{0\}}\,\big|\,x\bigr)
\qquad(x\in X)
$$
is well defined for all $x\in X$.
Furthermore,
$$
\sup_{x\in X}
\left|g_\V(x)-\pi^\V_{[0,n]}\bigl(\,[x]_{\{0\}}\,\big|\,x\bigr)\right|
\leq\sup_{x_{\N^-}=y_{\N^-}}
\Bigl\{\pi^\V_{[0,n]}\bigl(\,[x]_{\{0\}}\,\big|\,x\bigr)
-\pi^\V_{[0,n]}\bigl(\,[x]_{\{0\}}\,\big|\,y\bigr)\Bigr\},
$$
and so the convergence is uniform.
Thus, the limit is continuous,
and it follows from Proposition \ref{prop:g} that $g_\V\in\G_X$,
$g_\V$ is positive,
and $\mu\in\G(\V)$ if and only if $\mu$ is a $g_\V$-chain.

Applying Proposition \ref{prop:rb} again with $A=[-n,-1]$
(and $\{f_n\}$ as before) gives
\begin{align*}
\bar r_{[-n,0]}(g_\V)
&=\inf_{x_A=y_A}\ \lim_{m\to\infty}\ 
\frac{\pi^\V_{[0,m]}\bigl(\,[x]_{\{0\}}\big|\,x\bigr)}
{\pi^\V_{[0,m]}\bigl(\,[x]_{\{0\}}\big|\,y\bigr)}\\
&\geq\left(\frac{R_n}{1+R_n}\right)^2,
\end{align*}
as required.
\end{proof}

\begin{remark}
Proposition \ref{prop:g} and
Theorem \ref{thm:g} can easily be generalised to include non-invariant specifications $\V$,
in which case $g_\V$ is replaced by a sequence of continuous functions
$\{g_n\}_{n=-\infty}^\infty$ such that
$$
g_n(x)=\mu\bigl(\,[x]_{\{n\}}\,\big|\,\B_{(-\infty,n-1]}\bigr)(x)\qquad(n\in\Z).
$$
Note that in general, $g_n(x)\neq g_0(T^nx)$.
\end{remark}

The following result will be useful for estimating the variation of $g_\V$.

\begin{lemma}\label{lem:g}
Let $\V=\{\phi_\Lambda\}_{\Lambda\in\C}$
be a positive, continuous, $T$-invariant specification
with specification sequence $\{[0,n],f_n\}_{n=0}^\infty$. If
\begin{equation}\label{hyp2b}
\limsup_{n\to\infty}\,n^{\alpha-1}\prod_{i=0}^n\sup_{j\geq0}r_{(-\infty,i+j]}(f_j)
\leq K
\end{equation}
for some $0<\alpha\leq1,\,0<K<\infty$, then
$$
g_\V(x)=\lim_{n\to\infty}\pi^\V_{[0,n]}\bigl(\,[x]_{\{0\}}\,\big|\,x\bigr)
$$
is well defined, and
\begin{equation*}
\limsup_{n\to\infty}\,
\frac{\log r_{[-n,0]}(g_\V)}
{\Bigl(\log\sup_{i\geq0}r_{[-n,\infty)}(f_i)\Bigr)^\alpha}
\quad\leq\ 2\,\Gamma(\alpha)^{-1}K.
\end{equation*}
\end{lemma}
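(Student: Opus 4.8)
The plan is to deduce the lemma from Theorem~\ref{thm:g} after bounding from below the quantity $R_n$ that appears there. Write
\[
P_k=\prod_{j=0}^k\sup_{i\geq0}r_{(-\infty,i+j]}(f_i),\qquad\delta_n=\sup_{i\geq0}r_{[-n,\infty)}(f_i),
\]
so that hypothesis \eqref{hyp2b} reads $\limsup_{n\to\infty}n^{\alpha-1}P_n\leq K$. Since $\bar r_\Lambda(f)=r_\Lambda(f)^{-1}$ and $\alpha>0$, the bound $P_n=O(n^{1-\alpha})$ gives $\prod_{j=0}^k\inf_{i\geq0}\bar r_{(-\infty,i+j]}(f_i)=P_k^{-1}\geq c\,k^{\alpha-1}$ for some $c>0$ and all large $k$, so that series diverges and the hypothesis of Theorem~\ref{thm:g} holds. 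Hence $g_\V$ is well defined (continuous, positive, in $\G_X$) and
\[
r_{[-n,0]}(g_\V)\leq\bigl(1+R_n^{-1}\bigr)^2,\qquad R_n=\sum_{k=0}^\infty\prod_{j=0}^k\inf_{i\geq0}\bar r_{[-n,i+j]}(f_i),
\]
and it remains only to bound $R_n$ from below in terms of $\delta_n$.

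The key step is the splicing inequality
\[
r_{[-n,i+j]}(f_i)\leq r_{(-\infty,i+j]}(f_i)\,r_{[-n,\infty)}(f_i)\qquad(i,j,n\geq0).
\]
To prove it, given $x,y\in X$ with $x_{[-n,i+j]}=y_{[-n,i+j]}$ I would take the point $z\in X$ that agrees with $x$ on $(-\infty,i+j]$ and with $y$ on $[i+j+1,\infty)$; then $z_{(-\infty,i+j]}=x_{(-\infty,i+j]}$ and $z_{[-n,\infty)}=y_{[-n,\infty)}$, so $f_i(x)/f_i(y)=\bigl(f_i(x)/f_i(z)\bigr)\bigl(f_i(z)/f_i(y)\bigr)$ is at most $r_{(-\infty,i+j]}(f_i)\,r_{[-n,\infty)}(f_i)$. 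Taking the supremum over $i\geq0$ and the product over $0\leq j\leq k$ gives $\prod_{j=0}^k\sup_{i\geq0}r_{[-n,i+j]}(f_i)\leq\delta_n^{\,k+1}P_k$, whence
\[
R_n=\sum_{k=0}^\infty\Bigl(\prod_{j=0}^k\sup_{i\geq0}r_{[-n,i+j]}(f_i)\Bigr)^{-1}\geq\sum_{k=0}^\infty\delta_n^{-(k+1)}P_k^{-1}.
\]

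To finish, fix $\epsilon>0$, choose $k_0$ with $P_k\leq(K+\epsilon)k^{1-\alpha}$ for $k\geq k_0$, and set $t_n=\log\delta_n>0$. Because $0<\alpha\leq1$ the function $x\mapsto e^{-xt_n}x^{\alpha-1}$ is decreasing on $(0,\infty)$ (its derivative has the sign of $-t_nx+\alpha-1<0$), so its sum over integers $k\geq k_0$ dominates its integral, giving
\[
R_n\geq(K+\epsilon)^{-1}e^{-t_n}\!\sum_{k\geq k_0}e^{-kt_n}k^{\alpha-1}\geq(K+\epsilon)^{-1}e^{-t_n}\!\int_{k_0}^{\infty}\!e^{-xt_n}x^{\alpha-1}\,dx=(K+\epsilon)^{-1}e^{-t_n}t_n^{-\alpha}\!\int_{k_0t_n}^{\infty}\!e^{-u}u^{\alpha-1}\,du.
\]
Along subsequences with $t_n\to0$ one has $\delta_n\to1$ and $\int_{k_0t_n}^\infty e^{-u}u^{\alpha-1}\,du\to\Gamma(\alpha)$, so $R_n^{-1}\leq(K+\epsilon)\Gamma(\alpha)^{-1}t_n^\alpha(1+o(1))$, and then $\log r_{[-n,0]}(g_\V)\leq2\log(1+R_n^{-1})\leq2R_n^{-1}$; dividing by $(\log\delta_n)^\alpha$, passing to the $\limsup$, and letting $\epsilon\downarrow0$ gives the asserted bound. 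Along any remaining subsequence $\delta_n\not\to1$, so $(\log\delta_n)^\alpha$ stays bounded away from $0$ while $\log r_{[-n,0]}(g_\V)\to0$ (since $g_\V$ is continuous and $\B_{\Z^-}$-measurable, its oscillation over $[-n,0]$-cylinders vanishes), whence the ratio tends to $0$ there; the two cases together yield the stated $\limsup$. The genuinely non-routine point is the splicing inequality — recognising that the two-sided conditioning set $[-n,i+j]$ is controlled by the two one-sided variations of $f_i$; everything else is Theorem~\ref{thm:g} combined with the standard Abelian estimate $\sum_{k\geq1}k^{\alpha-1}e^{-kt}\sim\Gamma(\alpha)t^{-\alpha}$ as $t\to0^+$.
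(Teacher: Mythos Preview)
Your proof is correct and follows essentially the same route as the paper: reduce to Theorem~\ref{thm:g}, bound $R_n$ below via the factorisation $r_{[-n,i+j]}(f_i)\leq r_{(-\infty,i+j]}(f_i)\,r_{[-n,\infty)}(f_i)$, and then use the Abelian asymptotic $\sum_{k\geq1}k^{\alpha-1}t^k\sim\Gamma(\alpha)(1-t)^{-\alpha}$ as $t\nearrow1$. The paper states the splicing inequality implicitly (it writes ``Note that $R_n\geq\sum a_kt_n^k$'' with $a_k=P_{k-1}^{-1}$ and $t_n=\delta_n^{-1}$ without spelling out the intermediate point $z$), and it obtains the Abelian estimate by quoting the Tauberian theorem in Feller rather than your direct integral comparison; it also simply asserts $t_n\nearrow1$ where you treat the case $\delta_n\not\to1$ separately --- so if anything your argument is slightly more self-contained, but the ideas are the same.
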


\begin{proof}
Note that \eqref{hyp2b} implies
$$
\sum_{n=0}^\infty\prod_{j=0}^n\inf_{i\geq0}\bar r_{(-\infty,i+j]}(f_i)=\infty,
$$
and so it follows from Theorem \ref{thm:g}
that $g_\V\in\G_X$ is well defined and
\begin{equation}\label{eq:gb}
\log r_{[-n,0]}(g_\V)
\leq2\log\left(1+R_n^{-1}\right)
\leq2R_n^{-1}
\qquad(n\geq 0),
\end{equation}
where
$$
R_n=\sum_{k=1}^\infty\prod_{j=0}^{k-1}\inf_{i\geq0}\bar r_{[-n,i+j]}(f_i)
$$
as before.
Note that
$$
R_n\geq\sum_{k=1}^\infty a_kt_n^k,
$$
where
$$
a_k=\prod_{j=0}^{k-1}\inf_{i\geq0}\bar r_{(-\infty,i+j]}(f_i),
\qquad t_n=\inf_{i\geq0}\bar r_{[-n,\infty)}(f_i).
$$
Note also that $a_n$ is decreasing, and the power series
$\sum_{k=1}^\infty a_kt^k$ is covergent for $0\leq t<1$.
For such series, we have the following Tauberian theorem (see \cite[p447]{F}):
for $0<\alpha<\infty$,
$$
\sum_{k=1}^\infty a_kt^k\ \sim\ (1-t)^{-\alpha}\quad\text{as }t\nearrow1
\iff a_n\ \sim\ \Gamma(\alpha)^{-1}n^{\alpha-1}\quad\text{as }n\to\infty.
$$
Thus, if
$$
\limsup_{n\to\infty}\,n^{\alpha-1}a_n^{-1}\leq K,
$$
where $0<\alpha\leq1$, $0<K<\infty$, then
$$
\lim_{t\to 1^-}\left((1-t)^\alpha \sum_{k=0}^\infty a_kt^k\right)^{-1}
\leq \Gamma(\alpha)^{-1}K,
$$
and since $1-t\sim|\log t|$ as $t\nearrow1$,
and $t_n\nearrow1$ as $n\to\infty$, it follows from \eqref{eq:gb} that
\begin{align*}
\limsup_{n\to\infty}\,\Bigl(\log\sup_{i\geq0}r_{[-n,\infty)}(f_i)\Bigr)^{-\alpha}
\log r_{[-n,0]}(g_\V)
&=2\limsup_{n\to\infty}\,|\log t_n|^{-\alpha}\log r_{[-n,0]}(g_\V)\\
&\leq 2\,\Gamma(\alpha)^{-1}K,
\end{align*}
as required.
\end{proof}

The next result follows from the results and techniques of
Johansson, \"Oberg and Pollicott in \cite{JOP},
although it is not formulated there; for an explicit proof, see \cite{H3}.

\begin{theorem}\label{thm:c}
Let $g\in\G$ be positive and continuous.
If
\begin{equation}\label{eq:hyp5}
\lim_{n\to\infty}\sum_{i=\lceil\lambda^{n-1}\rceil}^{\lceil\lambda^n\rceil}
(\log r_{[-i,0]}(g_\V))^2=0
\end{equation}
for some $\lambda>1$,
then there is a unique $g$-chain $\mu\in M(X)$, which is $T$-invariant and Bernoulli.
\end{theorem}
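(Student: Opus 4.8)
The plan is to follow the method of Johansson, \"Oberg and Pollicott \cite{JOP} (an explicit treatment is given in \cite{H3}), organised around the transfer operator of $g$. Write $\delta_i=\log r_{[-i,0]}(g)$; since $X$ is compact and $g$ is positive and continuous, $(\delta_i)_{i\geq0}$ is a non-negative decreasing sequence with $\delta_i\to0$, and \eqref{eq:hyp5} asserts that the $\lambda$-adic block sums $\sum_{i=\lceil\lambda^{n-1}\rceil}^{\lceil\lambda^n\rceil}\delta_i^2$ tend to $0$. Let $\L=\L_g\colon C(X)\to C(X)$ be the transfer operator associated with $g$ --- the positive operator obtained by prepending a $g$-distributed coordinate to the past --- which satisfies $\L\1=\1$ because $g\in\G_X$, and for which $\mu\in M(X)$ is a $g$-chain if and only if $\L^*\mu=\mu$. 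Just as in Section \ref{sec:spec} for Gibbs states, $\mu$ is the unique $g$-chain precisely when $\L^n h$ converges uniformly for every $h\in C(X)$; the limit is then $\L$-invariant, hence constant, and equals $\int h\,d\mu$ since $\L^*\mu=\mu$. Moreover, the conditions defining a $g$-chain are $T$-invariant (replacing $\mu$ by $\mu\circ T^{-1}$ leaves them unchanged), so a unique $g$-chain is automatically $T$-invariant. Since $g$ is continuous a $g$-measure $\mu$ exists, and by $\|\L h\|_\infty\leq\|h\|_\infty$ and density of locally constant functions it suffices to prove $\L^n\1_{[\zeta]}\to\mu([\zeta])$ uniformly for every cylinder $[\zeta]$, $\zeta\in S^{[0,k]}$.

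The heart of the matter is an $L^2(\mu)$ estimate for $u_n:=\L^n\1_{[\zeta]}$. Since $\L$ is an averaging operator with $\L^*\mu=\mu$, one has $\mathrm{Var}_\mu(\L h)\leq\mathrm{Var}_\mu(h)$, so $\mathrm{Var}_\mu(u_n)$ is non-increasing and its successive differences --- the $\mu$-means of the one-step conditional variances of $u_n$ --- are summable; hence those conditional variances vanish, which bootstraps to the statement that the oscillation of $u_n$ over any fixed bounded block of coordinates tends to $0$. The classical Doeblin--Fortet/Berbee bound (as in Theorem \ref{thm:g}) controls the oscillation of $u_n$ over changes in the deep past by telescoping the $n$ weight-factors and applying the triangle inequality, at a cost of $\sum\delta_i$; the point of \cite{JOP} is that, carried out in $L^2(\mu)$ rather than in $L^\infty$, the increments of this telescoping are orthogonal, so that the corresponding contribution to $\mathrm{Var}_\mu(u_n)$ is governed by sums of $\delta_i^2$ rather than by $\bigl(\sum\delta_i\bigr)^2$. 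Comparing the scales $N_m=\lceil\lambda^m\rceil$, one finds that the deviation of $u_{N_m}$ from $\mu([\zeta])$ is controlled, up to a fixed constant and the bounded-window error above, by the block sum $\sum_{i=\lceil\lambda^{m-1}\rceil}^{\lceil\lambda^m\rceil}\delta_i^2$, which tends to $0$ by \eqref{eq:hyp5}; hence $u_{N_m}\to\mu([\zeta])$ in $L^2(\mu)$. Here the use of a geometric subsequence is essential: it is why only the vanishing of the $\lambda$-adic block sums is required, rather than the full convergence of $\sum_i\delta_i^2$.

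To upgrade $L^2(\mu)$-convergence to uniform convergence, I would use that $\inf g>0$ ($g$ positive on the compact space $X$), so that $\L^\ell$ enjoys a Doeblin-type minorization over any fixed number $\ell$ of steps; together with the bounded-window oscillation bound this turns $L^2(\mu)$-closeness of $u_{N_m}$ to a constant into sup-norm closeness of $u_{N_m+\ell}$ to that constant, for a suitably chosen slowly growing $\ell=\ell_m$. The remaining gaps $N_m<n<N_{m+1}$ are filled by writing $u_n=\L^{n-N_m}u_{N_m}$ and using $\|\L h'\|_\infty\leq\|h'\|_\infty$, so $\L^n\1_{[\zeta]}\to\mu([\zeta])$ uniformly in $n$. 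This yields the unique $g$-chain $\mu$, which is $T$-invariant as noted.

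For the Bernoulli property, the uniform convergence obtained above is exactly a uniform coupling statement: two copies of the stationary $g$-process generated forward from arbitrary, possibly distinct, pasts can be coupled so that the probability that they disagree anywhere on $[n,\infty)$ tends to $0$ uniformly --- equivalently, the $\bar d$-distance between the conditional distributions of the future given two arbitrary pasts tends to $0$. This verifies Ornstein's very-weak-Bernoulli criterion for the natural extension of $(T,\mu)$, so $(T,\mu)$ is isomorphic to a Bernoulli shift, as in \cite{JOP}. I expect the main obstacle to be the content of the second paragraph: making the $L^2(\mu)$ orthogonality estimate precise enough that the deep-past oscillation of $\L^n\1_{[\zeta]}$ is controlled exactly by the $\lambda$-adic block sums of $\delta_i^2$ appearing in \eqref{eq:hyp5} --- together with the attendant $L^2$-to-uniform upgrade --- while the remaining ingredients (the reduction to cylinders, the gap-filling, and the deduction of the Bernoulli property from coupling) are comparatively routine.
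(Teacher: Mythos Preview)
The paper does not give its own proof of this theorem: immediately before the statement it says that the result ``follows from the results and techniques of Johansson, \"Oberg and Pollicott in \cite{JOP}, although it is not formulated there; for an explicit proof, see \cite{H3}.'' Your proposal follows precisely that route --- the transfer operator $\L_g$, the $L^2(\mu)$ variance argument with orthogonal telescoping increments controlled by $\sum\delta_i^2$ over $\lambda$-adic blocks, the upgrade from $L^2$ to uniform convergence via a Doeblin minorization, and the Bernoulli property through the $\bar d$/very-weak-Bernoulli coupling criterion --- so there is nothing to compare: you are reconstructing exactly the argument the paper defers to.
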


Note that if \eqref{eq:hyp5} holds for some $\lambda>1$,
then it holds for all such $\lambda$.

\section{Proof of main results}

\begin{theorem}\label{thm:bern1}
Let $\V=\{\phi_\Lambda\}_{\Lambda\in\C}$
be a positive, continuous, $T$-invariant specification on $S^\Z$
with specification sequence $\{[0,n],f_n\}_{n=0}^\infty$.
If, for some $0<\alpha\leq1$,
\begin{equation}\label{hyp2a}
\prod_{i=0}^n\sup_{j\geq0}r_{(-\infty,i+j]}(f_j)
=O\bigl(n^{1-\alpha}\bigr),
\end{equation}
and
\begin{equation}\label{hyp4a}
\sum_{i=2^n}^{2^{n+1}}
\Bigl(\log\sup_{j\geq0}r_{[-i,\infty)}(f_j)\Bigr)^{2\alpha}=o(1),
\end{equation}
then $\V$ has a unique Gibbs state $\mu$, which is $T$-invariant,
and $(T,\mu)$ is Bernoulli.
\end{theorem}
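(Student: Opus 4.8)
The plan is to deduce Theorem \ref{thm:bern1} by combining Lemma \ref{lem:g} — which identifies $\G(\V)$ with the set of $g_\V$-chains and controls the variation of $\log g_\V$ — with Theorem \ref{thm:c}, which converts such a variation bound into uniqueness, $T$-invariance and the Bernoulli property. The whole argument then amounts to feeding \eqref{hyp2a} into Lemma \ref{lem:g} and feeding the resulting estimate, together with \eqref{hyp4a}, into the hypothesis \eqref{eq:hyp5} of Theorem \ref{thm:c}.

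First I would observe that \eqref{hyp2a} is exactly hypothesis \eqref{hyp2b} of Lemma \ref{lem:g}: writing $\prod_{i=0}^n\sup_{j\ge0}r_{(-\infty,i+j]}(f_j)\le Kn^{1-\alpha}$ for all $n\ge1$ (possible with some finite $K$ since the $O$-estimate holds and the $n=0$ factor is finite by continuity), we get $\limsup_{n\to\infty}n^{\alpha-1}\prod_{i=0}^n\sup_{j\ge0}r_{(-\infty,i+j]}(f_j)\le K$. Lemma \ref{lem:g} (via Theorem \ref{thm:g}) then tells us that $g_\V(x)=\lim_{n\to\infty}\pi^\V_{[0,n]}([x]_{\{0\}}\mid x)$ is well defined, that $g_\V$ is a positive continuous element of $\G_X$ with $\mu\in\G(\V)$ if and only if $\mu$ is a $g_\V$-chain, and that
\[
\limsup_{n\to\infty}\,\Bigl(\log\sup_{i\ge0}r_{[-n,\infty)}(f_i)\Bigr)^{-\alpha}\log r_{[-n,0]}(g_\V)\ \le\ 2\,\Gamma(\alpha)^{-1}K.
\]
Thus it suffices to show that $g_\V$ admits a unique $g_\V$-chain, which is $T$-invariant and Bernoulli.

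Next I would verify \eqref{eq:hyp5} with $\lambda=2$. Fix any constant $C>2\,\Gamma(\alpha)^{-1}K$; the displayed bound yields an $N$ such that $\log r_{[-i,0]}(g_\V)\le C\bigl(\log\sup_{j\ge0}r_{[-i,\infty)}(f_j)\bigr)^\alpha$ for all $i\ge N$ (the degenerate case $\sup_{j\ge0}r_{[-i,\infty)}(f_j)=1$ causes no trouble: the right-hand side is then $0$, and since $r_{[-i,0]}(g_\V)\ge1$ always, this forces $\log r_{[-i,0]}(g_\V)=0$). Hence, for $n$ large enough that $\lceil2^{n-1}\rceil\ge N$,
\[
\sum_{i=\lceil2^{n-1}\rceil}^{\lceil2^{n}\rceil}\bigl(\log r_{[-i,0]}(g_\V)\bigr)^2\ \le\ C^2\sum_{i=2^{n-1}}^{2^{n}}\Bigl(\log\sup_{j\ge0}r_{[-i,\infty)}(f_j)\Bigr)^{2\alpha},
\]
and the right-hand side tends to $0$ as $n\to\infty$ by \eqref{hyp4a}. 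So \eqref{eq:hyp5} holds with $\lambda=2$, and Theorem \ref{thm:c} gives a unique $g_\V$-chain $\mu$, which is $T$-invariant and Bernoulli. Since $\G(\V)$ is precisely the set of $g_\V$-chains, $\mu$ is the unique Gibbs state of $\V$, which completes the argument.

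I do not anticipate a real obstacle: at this stage the theorem is essentially a corollary, and the only points requiring attention are bookkeeping ones — that the left-sided variations $r_{(-\infty,i+j]}(f_j)$ appearing in \eqref{hyp2a} are exactly what Lemma \ref{lem:g} consumes; that the right-sided variations $r_{[-i,\infty)}(f_j)$ it produces match (after the power $2\alpha$) the summand in \eqref{hyp4a}; and that Lemma \ref{lem:g} supplies only an asymptotic bound on $\log r_{[-n,0]}(g_\V)$, which is enough because \eqref{eq:hyp5} concerns only the large-$n$ behaviour of dyadic blocks. The substantive work — the ratio-bound estimate of Proposition \ref{prop:rb} underlying Theorem \ref{thm:g}, and the $L^2$-type summability argument behind Theorem \ref{thm:c} — has already been carried out upstream.
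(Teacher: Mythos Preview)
Your proposal is correct and follows essentially the same route as the paper: feed \eqref{hyp2a} into Lemma~\ref{lem:g} (which rests on Theorem~\ref{thm:g}) to obtain $\log r_{[-n,0]}(g_\V)=O\bigl((\log\sup_{j\ge0}r_{[-n,\infty)}(f_j))^\alpha\bigr)$, then combine with \eqref{hyp4a} to verify \eqref{eq:hyp5} and invoke Theorem~\ref{thm:c}. The paper's own proof is two sentences to this effect; you have simply spelled out the bookkeeping (choice of $\lambda=2$, passage from the $\limsup$ bound to a pointwise one for large $i$) that the paper leaves implicit.
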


\begin{proof}
It follows from \eqref{hyp2a},
Theorem  \ref{thm:g} and Lemma \ref{lem:g}
that there is a positive, continuous $g_\V\in\G_X$ such that
any Gibbs state for $\V$ is a $g_\V$-chain, and
\begin{equation*}
\log r_{[-n,0]}(g_\V)
=O\left(\Bigl(\log\sup_{i\geq0}r_{[-n,\infty)}(f_i)\Bigr)^\alpha\right).
\end{equation*}
The result now follows from \eqref{hyp4a} and Theorem \ref{thm:c}.
\end{proof}

Theorem \ref{thm:bern} follows immediately from Theorem \ref{thm:bern1}
and \eqref{eq:fs}--\eqref{eq:f+var}.
If the specification is described in terms of an interaction potential
$\Phi=\{\Phi_\Lambda\}_{\Lambda\in\C}$,
then it follows from \eqref{eq:ipf+} and \eqref{eq:ipr}
that \eqref{hyp2a} and  \eqref{hyp4a} hold provided
there are constants $K>0$ and $0<\alpha\leq1$ such that
$$
\sum_{i=0}^n
\sum_{\substack{\Lambda\ni0\\\Lambda\cap[i+1,\infty)\neq\emptyset}}
\var(\Phi_\Lambda)
\ \leq\ (1-\alpha)\log n+K\qquad(n\geq0),
$$
and
$$
\lim_{n\to\infty}\sum_{i=2^n}^{2^{n+1}}
\Biggl(\sum_{\substack{\Lambda\ni0\\\Lambda\cap[i+1,\infty)\neq\emptyset}}
\var(\Phi_\Lambda)\Biggr)^{2\alpha}=0.
$$
This will be the case if
$$
\sum_{\substack{\Lambda\ni0\\\Lambda\cap[n+1,\infty)\neq\emptyset}}
\var(\Phi_\Lambda)
\leq(1-\alpha)n^{-1}+a_n
\qquad(n\geq1)
$$
for some $1/2<\alpha\leq1$ and $\sum a_n<\infty$,
that is, if \eqref{hyp3} holds with $\beta=1-\alpha$;
thus Theorem \ref{thm:bern2} follows.

Since
$$
\sum_{n=0}^\infty\sum_{\substack{\Lambda\ni0\\\Lambda\cap[n+1,\infty)\neq\emptyset}}
\var(\Phi_\Lambda)
\leq\sum_{\Lambda\ni0}\text{diam}(\Lambda)\|\Phi_\Lambda\|,
$$
Theorem \ref{thm:bern2} includes potentials satisfying \eqref{eq:h2}.
Comparison with \eqref{eq:h3} and \eqref{eq:h4} is less straightforward;
in some cases, its hypotheses are weaker, but not always.
Note however that Theorem \ref{thm:bern2} guarantees a unique Gibbs state
(absence of symmetry breakdown), whereas \eqref{eq:h3} only guarantees uniqueness
of $T$-invariant Gibbs states, and \eqref{eq:h4} does not include the Bernoulli property. 

By way of comparison,
consider the Ising model on $\{-1,1\}^\Z$ with pair interactions
\begin{equation}\label{ising}
\Phi_{\{i,j\}}(x)=-\frac{\beta}{2}J(|i-j|)x_ix_j,
\end{equation}
where $\beta>0$ (the inverse temperature) is a constant and
$|J(n)|\leq n^{-\alpha}$ for some $\alpha>1$.
Since Berbee's result applies to non-invariant specifications
and does not depend on any group structure, there is no loss of generality
in assuming that $I=\Z$ and
\begin{equation*}
\Lambda_{2k}=[-k,k],\qquad\Lambda_{2k+1}=[-k,k+1]
\qquad(k\geq0)
\end{equation*}
(as in \eqref{eq:li}), other than the assumption that
$|\Lambda_n\setminus\Lambda_{n-1}|=1$.

The corresponding specification sequence $\{\Lambda_n,f_n\}$ is given by
\begin{equation*}
f_n(x)
=\exp\left(\frac{\beta}{2}\sum_{j=1}^\infty
|i_n-i_{n+j}|^{-\alpha}x_{i_n}x_{i_{n+j}}\right)
\qquad(n\geq 0),
\end{equation*}
where $i_n$ is the unique element of $\Lambda_n\setminus\Lambda_{n-1}$.
Note that
$\inf_{i\geq 0}\bar r_{\Lambda_{i+k}}(f_i)=\bar r_{\Lambda_k}(f_0)$,
\begin{equation*}
\begin{aligned}
\bar r_{\Lambda_{2k}}(f_0)&=
\exp\left(-2\beta\sum_{j=k+1}^\infty j^{-\alpha}\right),\\
\bar r_{\Lambda_{2k+1}}(f_0)&=
\exp\left(-2\beta\sum_{j=k+1}^\infty j^{-\alpha}
+\beta(k+1)^{-\alpha}\right),
\end{aligned}
\end{equation*}
and
$$
r_{(-\infty,n]}(f_0)=r_{[-n,\infty)}(f_0)
=\exp\left(\beta\sum_{j=n+1}^\infty j^{-\alpha}\right).
$$
Thus, for $\alpha>2$, Berbee's condition and Theorem \ref{thm:bern} both
give uniqueness and the Bernoulli condition for all $\beta$ (as do the results in \cite{CQ} and \cite{R68}),
whereas for $\alpha>2$, no conclusion can be drawn.
For the borderline case $\alpha=2$,
no conclusion can be drawn from the results in \cite{CQ} and \cite{R68},
but Berbee's condition gives
uniqueness for $\beta\leq\frac{1}{4}$;
from Theorem \ref{thm:bern} we get uniqueness and the Bernoulli property
for $\beta<\frac{1}{2}$.

Note that a better uniqueness result for the case $J(n)=n^{-2}$ is obtained in \cite{ACCN}.
In this case, since  $J(n)\geq0$ for all $n$, the interaction potential is ferromagnetic,
and as is well known, there is a critical inverse temperature $\beta_c>0$
such that there is a unique Gibbs state when $0\leq\beta<\beta_c$,
but non-unique $T$-invariant Gibbs states for $\beta>\beta_c$.
It is shown in \cite{ACCN} that $\beta_c>1$ (with non-uniqueness at $\beta_c$).
In particular, there is a unique Gibbs state when $\beta\leq1$;
moreover, it follows from results in \cite{H1} that this Gibbs state is $T$-invariant and Bernoulli.

\section{Uniqueness without the Bernoulli property}

A recent result of
Berger,  Conache,  Johansson and \"Oberg \cite{BCJO}
proves uniqueness of the $g$-measure for $g\in\G$ such that
\begin{equation}\label{eq:gc}
\limsup_{n\to\infty}\,n^{1/2}\log r_{[-n,0]}(g) \,< \,2.
\end{equation}
Using this result instead of  Theorem \ref{thm:c} gives uniqueness of the $T$-invariant Gibbs state
under hypotheses which are slightly weaker in some cases than those of Theorem \ref{thm:bern1},
at the expense of the Bernoulli property.

\begin{theorem}
Let $\V=\{\phi_\Lambda\}_{\Lambda\in\C}$
be a positive, continuous, $T$-invariant specification on $S^\Z$
with specification sequence $\{[0,n],f_n\}_{n=0}^\infty$.
If, for some $0<\alpha\leq1$ and $K>0$,
\begin{equation}\label{hyp2ai}
\limsup_{n\to\infty}\,
n^{\alpha-1}\,\prod_{i=0}^n\sup_{j\geq0}r_{(-\infty,i+j]}(f_j)
\leq K,
\end{equation}
and
\begin{equation}\label{hyp4ai}
\limsup_{n\to\infty}\,n^{1/2}
\Bigl(\log\sup_{j\geq0}r_{[-i,\infty)}(f_j)\Bigr)^\alpha\,
<\,K^{-1}\Gamma(\alpha),
\end{equation}
then $\V$ has a unique $T$-invariant Gibbs state.
\end{theorem}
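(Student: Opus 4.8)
The plan is to mirror the proof of Theorem~\ref{thm:bern1} almost verbatim, substituting the new quantitative inputs. First I would invoke hypothesis~\eqref{hyp2ai}, which is precisely condition~\eqref{hyp2b} of Lemma~\ref{lem:g} with constant $K$, so that Lemma~\ref{lem:g} (and, upstream, Theorem~\ref{thm:g}) applies: this yields a positive, continuous $g_\V\in\G_X$ such that every Gibbs state of $\V$ is a $g_\V$-chain, together with the variation bound
\begin{equation*}
\limsup_{n\to\infty}\,
\frac{\log r_{[-n,0]}(g_\V)}{\Bigl(\log\sup_{i\geq0}r_{[-n,\infty)}(f_i)\Bigr)^\alpha}
\ \leq\ 2\,\Gamma(\alpha)^{-1}K.
\end{equation*}

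Next I would combine this with hypothesis~\eqref{hyp4ai}. Multiplying the two $\limsup$s — using that $\log\sup_i r_{[-n,\infty)}(f_i)\to 0$ so the relevant quantities are eventually positive, and that $\limsup(a_nb_n)\le(\limsup a_n)(\limsup b_n)$ when both factors are nonnegative — gives
\begin{equation*}
\limsup_{n\to\infty}\,n^{1/2}\log r_{[-n,0]}(g_\V)
\ \leq\ \Bigl(2\Gamma(\alpha)^{-1}K\Bigr)\cdot\Bigl(\limsup_{n\to\infty} n^{1/2}\bigl(\log\sup_i r_{[-n,\infty)}(f_i)\bigr)^\alpha\Bigr)
\ <\ 2\Gamma(\alpha)^{-1}K\cdot K^{-1}\Gamma(\alpha)
\ =\ 2.
\end{equation*}
Thus $g_\V$ satisfies the Berger--Conache--Johansson--\"Oberg criterion~\eqref{eq:gc}, so there is a unique $g_\V$-chain $\mu$, and it is $T$-invariant.

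Finally, since every Gibbs state of $\V$ is a $g_\V$-chain (by the first step), uniqueness of the $g_\V$-chain forces $\G(\V)=\{\mu\}$ with $\mu$ the $T$-invariant measure supplied by~\eqref{eq:gc}; in particular the $T$-invariant Gibbs state is unique. (One should note that $\G(\V)$ is nonempty because $\V$ is continuous, as recalled in Section~\ref{sec:spec}, so this is a genuine uniqueness statement.) The one point requiring a little care — the ``main obstacle,'' such as it is — is the passage through the two $\limsup$s: one must check that the denominator in Lemma~\ref{lem:g} is eventually strictly positive (guaranteed since $r_{[-n,\infty)}(f_i)>1$ for the nontrivial case, and if it is eventually $1$ the bound is trivial) so that the product of $\limsup$s genuinely dominates the $\limsup$ of the product, and that the strict inequality in~\eqref{hyp4ai} survives the multiplication, which it does because the Lemma~\ref{lem:g} bound is a (non-strict) $\limsup$ bounded by a finite constant. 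Everything else is a direct citation of the results established earlier in the paper.
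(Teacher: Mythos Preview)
Your overall architecture is the same as the paper's, and the use of Lemma~\ref{lem:g} together with the product--of--$\limsup$s step is fine. The gap is in the final step, where you invoke \cite{BCJO}. The criterion~\eqref{eq:gc} yields uniqueness of the $g_\V$-\emph{measure}, i.e.\ uniqueness among \emph{$T$-invariant} $g_\V$-chains; it does \emph{not} give uniqueness of $g_\V$-chains in general. (You may be transplanting the stronger conclusion of Theorem~\ref{thm:c}, which does give a unique $g$-chain, onto the weaker hypothesis~\eqref{eq:gc}.) Consequently your deduction ``uniqueness of the $g_\V$-chain forces $\G(\V)=\{\mu\}$'' is unjustified --- and indeed the Remark immediately following the theorem points out that $\G(\V)$ need not be a singleton here.

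The repair is simple and is exactly what the paper does: restrict the last step to $T$-invariant measures. Every $T$-invariant Gibbs state is a $T$-invariant $g_\V$-chain, i.e.\ a $g_\V$-measure; by~\eqref{eq:gc} the $g_\V$-measure is unique; and Section~\ref{sec:spec} guarantees at least one $T$-invariant Gibbs state. Hence the $T$-invariant Gibbs state is unique, which is precisely the stated conclusion. Drop the claim $\G(\V)=\{\mu\}$ entirely.
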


\begin{proof}
It follows from \eqref{hyp2ai} and Theorem  \ref{thm:g} that
any Gibbs state for $\V$ is a $g_\V$-chain, where
$$
g_\V(x)=\lim_{n\to\infty}\pi^\V_{[0,n]}\bigl(\,[x]_{\{0\}}\,\big|\,x\bigr),
$$
and from Lemma \ref{lem:g} that $g_\V$ satisfies \eqref{eq:gc} provided \eqref{hyp4ai} holds;
thus the result follows.
\end{proof}

\begin{remark}
Note that there is not necessarily a unique Gibbs state,
unless $\V$ is also attractive,
in which case the unique $T$-invariant Gibbs state is also the unique Gibbs state,
and is a Bernoulli measure for $T$ (see \cite{H1}).
\end{remark}

If the specification is described in terms of an interaction potential
$\Phi=\{\Phi_\Lambda\}_{\Lambda\in\C}$,
then it follows from \eqref{eq:ipf+} and \eqref{eq:ipr}
that $\Phi$ has a unique $T$-invariant Gibbs state provided
there are constants $K>0$ and $0<\alpha\leq1$ such that
$$
\sum_{i=0}^n
\sum_{\substack{\Lambda\ni0\\\Lambda\cap[i+1,\infty)\neq\emptyset}}
\var(\Phi_\Lambda)
\ \leq\ (1-\alpha)\log n+\log K\qquad(n\geq0),
$$
and
$$
\limsup_{n\to\infty}\,n^{1/2}\,
\Biggl(\sum_{\substack{\Lambda\ni0\\\Lambda\cap[n+1,\infty)\neq\emptyset}}
\var(\Phi_\Lambda)\Biggr)^\alpha<K^{-1}\Gamma(\alpha).
$$
Since $\Gamma(1/2)=\sqrt{\pi}$, this will be the case if, for example,
$$
\sum_{\substack{\Lambda\ni0\\\Lambda\cap[n,\infty)\neq\emptyset}}
\var(\Phi_\Lambda)
\leq\, \frac{1}{2n}
\qquad(n\geq1).
$$
(Note that this does not imply \eqref{hyp4a} holds.)

\end{document}